\newtheorem*{theorem*}{Theorem}
\newtheorem*{conjecture*}{Conjecture}
\newtheorem{theorem}{Theorem}[section]
\newtheorem{proposition}[theorem]{Proposition}
\newtheorem{definition}[theorem]{Definition}
\newtheorem{example}[theorem]{Example}
\newtheorem{conjecture}[theorem]{Conjecture}
\newtheorem{remark}[theorem]{Remark}
\newtheorem{corollary}[theorem]{Corollary}
\newtheorem{principle}[theorem]{Principle}
\newcommand\blfootnote[1]{
  \begingroup
  \renewcommand\thefootnote{}\footnote{#1}
  \addtocounter{footnote}{-1}
  \endgroup
}
\begin{document}

\title{Higher Lawvere theories}
\author{John D. Berman}
\maketitle

\begin{abstract}\blfootnote{The author was supported by an NSF Postdoctoral Fellowship under grant 1803089.}
We survey Lawvere theories at the level of $\infty$-categories, as an alternative framework for higher algebra (rather than $\infty$-operads). From a pedagogical perspective, they make many key definitions and constructions less technical. They are also better-suited than operads for equivariant homotopy theory and its relatives.

Our main result establishes a universal property for the $\infty$-category of Lawvere theories, which completely characterizes the relationship between a Lawvere theory and its $\infty$-category of models. Many familiar properties of Lawvere theories follow directly.

As a consequence, we prove that the Burnside category is a classifying object for additive categories, as promised in an earlier paper, and as part of a more general correspondence between enriched Lawvere theories and module Lawvere theories.
\end{abstract}

\section{Introduction}
\noindent The primary goal of this paper is to lay the foundations for a Lawvere theoretic approach to higher algebra. As such, we organize the paper as a survey, combining others' work with our own. In this introduction, we summarize the new results for those who are familiar with the subject; however, the casual reader may be better served by reading Section 2, followed by this introduction.

A Lawvere theory $\mathcal{L}$ encodes a particular type of algebraic theory (for example, commutative monoids). A \emph{model} of $\mathcal{L}$ is an instance of that algebraic structure. As we are working with $\infty$-categories, we take models by default relative to the $\infty$-category Top of spaces.

Specifically, $\mathcal{L}$ is an $\infty$-category with finite products, generated by a single object, and models are functors $\mathcal{L}\rightarrow\text{Top}$ which preserve finite products.

The assignment from $\mathcal{L}$ to the $\infty$-category $\text{Mdl}_\mathcal{L}$ of models is functorial $$\text{Mdl}:\text{Lwv}\rightarrow\text{Pr}^L,$$ where an object of $\text{Pr}^L$ is a presentable $\infty$-category, and a morphism is a left adjoint functor. In fact, since any Lawvere theory $\mathcal{L}$ has a distinguished object $1$, $\text{Mdl}_\mathcal{L}$ also has a distinguished object, the model $\text{Map}_\mathcal{L}(1,-)$. Therefore, $\text{Mdl}$ may be promoted to a functor $\text{Lwv}\rightarrow\text{Pr}^L_\ast$.

\begin{theorem*}[Theorems \ref{Thm1}, \ref{Thm2}]
The functor $\text{Mdl}:\text{Lwv}\rightarrow\text{Pr}^L_\ast$ is fully faithful and symmetric monoidal, and it has a right adjoint which sends $\mathcal{C}\in\text{Pr}^L_\ast$ to $\mathcal{C}_\text{fgf}^\text{op}$, where $\mathcal{C}_\text{fgf}\subseteq\mathcal{C}$ is the full subcategory of \emph{finitely generated free} objects.
\end{theorem*}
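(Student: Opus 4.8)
The plan is to identify $\text{Mdl}_\mathcal{L}$ with a free cocompletion and then run everything through its universal property. Since $\mathcal{L}$ has finite products, $\mathcal{L}^\text{op}$ has finite coproducts, and a product-preserving functor $\mathcal{L}\to\text{Top}$ is precisely an object of $\mathcal{P}_\Sigma(\mathcal{L}^\text{op})$, the free sifted cocompletion of $\mathcal{L}^\text{op}$; thus $\text{Mdl}_\mathcal{L}\simeq\mathcal{P}_\Sigma(\mathcal{L}^\text{op})$. I would then invoke its universal property: for any $\mathcal{D}\in\text{Pr}^L$, restriction along the Yoneda embedding $j:\mathcal{L}^\text{op}\to\mathcal{P}_\Sigma(\mathcal{L}^\text{op})$ gives an equivalence $\text{Fun}^L(\text{Mdl}_\mathcal{L},\mathcal{D})\simeq\text{Fun}^\sqcup(\mathcal{L}^\text{op},\mathcal{D})$ between colimit-preserving and finite-coproduct-preserving functors. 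The distinguished object of $\text{Mdl}_\mathcal{L}$ is $j(1)=\text{Map}_\mathcal{L}(1,-)$, and since $j$ carries $1^{\times n}\in\mathcal{L}$ to $\coprod_n j(1)$ — exactly the free model on an $n$-element set — the essential image of the fully faithful functor $j$ is the subcategory $(\text{Mdl}_\mathcal{L})_\text{fgf}$ of finitely generated free models.

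With this in hand I would establish the adjunction directly on mapping spaces. First, $\mathcal{C}_\text{fgf}^\text{op}$ is a Lawvere theory: the point of $\mathcal{C}\in\text{Pr}^L_\ast=\text{Top}/\text{Pr}^L$ is a colimit-preserving functor $\text{Top}\to\mathcal{C}$, i.e. an object $c\in\mathcal{C}$ with free functor $\text{Free}(X)=X\otimes c$, and the objects $\text{Free}(n)=\coprod_n c$ are closed under finite coproducts and generated by $c$, so $\mathcal{C}_\text{fgf}$ has finite coproducts and one generator. Next, a map $\text{Mdl}_\mathcal{L}\to\mathcal{C}$ in $\text{Pr}^L_\ast$ is a colimit-preserving functor sending $j(1)$ to $c$, which by the universal property is the same as a finite-coproduct-preserving $f:\mathcal{L}^\text{op}\to\mathcal{C}$ with $f(1)=c$. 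Because $f$ preserves finite coproducts and $\mathcal{L}^\text{op}$ is generated by $1$, the image of $f$ consists of finite coproducts of $c$ and so lands in $\mathcal{C}_\text{fgf}$; corestricting yields a generator- and finite-coproduct-preserving functor $\mathcal{L}^\text{op}\to\mathcal{C}_\text{fgf}$, i.e. a morphism $\mathcal{L}\to\mathcal{C}_\text{fgf}^\text{op}$ in $\text{Lwv}$. This produces the natural equivalence $\text{Map}_{\text{Pr}^L_\ast}(\text{Mdl}_\mathcal{L},\mathcal{C})\simeq\text{Map}_{\text{Lwv}}(\mathcal{L},\mathcal{C}_\text{fgf}^\text{op})$, so $\text{Mdl}\dashv(-)_\text{fgf}^\text{op}$.

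Full faithfulness then follows formally, since a left adjoint is fully faithful exactly when its unit is an equivalence: the unit is the map $\mathcal{L}\to(\text{Mdl}_\mathcal{L})_\text{fgf}^\text{op}$ induced by $j$, and I already identified the essential image of the fully faithful $j$ with $(\text{Mdl}_\mathcal{L})_\text{fgf}$, giving $\mathcal{L}^\text{op}\xrightarrow{\sim}(\text{Mdl}_\mathcal{L})_\text{fgf}$. For the symmetric monoidal claim I would factor $\text{Mdl}$ as $\mathcal{P}_\Sigma\circ(-)^\text{op}$ and use that $\mathcal{P}_\Sigma$ refines to a symmetric monoidal functor from categories-with-finite-coproducts (Cartesian product) to $(\text{Pr}^L,\otimes)$, characterized by $\mathcal{P}_\Sigma(\mathcal{A}\times\mathcal{B})\simeq\mathcal{P}_\Sigma(\mathcal{A})\otimes\mathcal{P}_\Sigma(\mathcal{B})$ and $\mathcal{P}_\Sigma(\text{Fin})\simeq\text{Top}$; since $\text{Fin}^\text{op}$ is the unit of $\text{Lwv}$ and $\text{Top}$ with its tautological point is the unit of $\text{Pr}^L_\ast$, this upgrades $\text{Mdl}$ to a symmetric monoidal functor and identifies the tensor of Lawvere theories with the tensor of their model categories.

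I expect the main obstacle to be twofold. The delicate bookkeeping is matching the \emph{finitely generated free} objects exactly with the Yoneda image: one must pin down the meaning of ``generated by a single object'' — in particular whether retracts of the $1^{\times n}$ are allowed — so that $\mathcal{C}_\text{fgf}$ is closed under the relevant operations and $(\text{Mdl}_\mathcal{L})_\text{fgf}\simeq\mathcal{L}^\text{op}$ holds on the nose rather than only after idempotent completion. The more substantial difficulty is the symmetric monoidal statement: promoting $\text{Mdl}$ to a symmetric monoidal functor requires the coherent (not merely object-level) monoidal structure on $\mathcal{P}_\Sigma$ together with a careful treatment of the pointing, i.e. checking that the $\text{Top}$-coslice structure on $\text{Pr}^L_\ast$ is compatible with $\otimes$ and that the distinguished objects multiply correctly.
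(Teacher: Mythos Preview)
Your approach is essentially the paper's: the adjunction via the universal property of $\mathcal{P}_\Sigma$ (the paper cites \cite{HTT} 5.3.6.10), full faithfulness from the unit $\mathcal{L}\to(\text{Mdl}_\mathcal{L})_\text{fgf}^\text{op}$ being an equivalence (the paper's Proposition~\ref{PropYoneda}), and the symmetric monoidal statement by invoking \cite{HA} 4.8.1 on $\mathcal{P}_\Sigma$ and then passing to pointed objects. The paper's only cosmetic difference is that it phrases the adjunction by showing the map $R_\ast$ induced by applying $(-)_\text{fgf}^\text{op}$ is an equivalence, rather than corestricting as you do.

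There is one genuine slip in your symmetric monoidal paragraph. The symmetric monoidal structure on $\infty$-categories with finite coproducts for which $\mathcal{P}_\Sigma$ is symmetric monoidal is \emph{not} the cartesian product: it is the Kronecker tensor (equivalently, Lurie's tensor on $\text{Cat}_\infty(\mathcal{K})$ from \cite{HA} 4.8.1.4 for $\mathcal{K}$ the class of finite discrete diagrams), characterized by $\text{Fun}^\sqcup(\mathcal{A}\otimes\mathcal{B},\mathcal{E})\simeq\text{Fun}^{\sqcup,\sqcup}(\mathcal{A}\times\mathcal{B},\mathcal{E})$. Your displayed formula $\mathcal{P}_\Sigma(\mathcal{A}\times\mathcal{B})\simeq\mathcal{P}_\Sigma(\mathcal{A})\otimes\mathcal{P}_\Sigma(\mathcal{B})$ is false as stated: a coproduct-preserving functor out of $\mathcal{A}\times\mathcal{B}$ is not the same as one preserving coproducts separately in each variable (compare $F((a\sqcup a',b\sqcup b'))$ under the two hypotheses). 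The correct statement, and what the paper uses via \cite{HA} 4.8.1.8, is $\mathcal{P}_\Sigma(\mathcal{A}\otimes\mathcal{B})\simeq\mathcal{P}_\Sigma(\mathcal{A})\otimes\mathcal{P}_\Sigma(\mathcal{B})$. With that correction your argument goes through, and indeed the Kronecker tensor is exactly the monoidal structure on $\text{Lwv}$ that the theorem is about.
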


\noindent In this way, Lwv is a \emph{symmetric monoidal colocalization} of the better understood $\infty$-category $\text{Pr}^L_\ast$, which allows us to study Lawvere theories using familiar tools like the adjoint functor theorem and Lurie's tensor products of categories (\cite{HA} 4.8).

The math here is not new, but the packaging is. That is, the theorem encapsulates the following facts about Lawvere theories:
\begin{itemize}
\item (the adjunction) $\text{Mdl}_\mathcal{L}$ is the \emph{free cocompletion} of $\mathcal{L}^\text{op}$, regarding the latter as an $\infty$-category with finite coproducts (\cite{HTT} 5.3.6.10);
\item (Mdl is fully faithful) $\mathcal{L}$ can be recovered from $\text{Mdl}_\mathcal{L}$ as the full subcategory of finitely generated free objects (Proposition \ref{PropYoneda});
\end{itemize}

\noindent The fact that Mdl is symmetric monoidal may be less familiar, but it is a consequence of Lurie's `tensor products of categories' machinery in \cite{HA} 4.8. In Section 3, we explore two direct corollaries of this monoidality:
\begin{itemize}
\item (Corollary \ref{Cor1}) If a Lawvere theory $\mathcal{L}$ admits a symmetric monoidal structure compatible with finite products, then $\text{Mdl}_\mathcal{L}$ inherits a closed symmetric monoidal structure called \emph{Day convolution};
\item (Corollary \ref{CorEnr}) If $\mathcal{L}$ is as above, then $\mathcal{L}$ is a semiring $\infty$-category. Any module over $\mathcal{L}$ is naturally enriched in $\text{Mdl}_\mathcal{L}$ with its Day convolution.
\end{itemize}

\noindent The first of these is well-known, but the second we believe is new. It also admits a partial converse:

\begin{theorem*}[Theorem \ref{ThmEnr}]
If $\mathcal{L}$ admits a symmetric monoidal structure compatible with finite products, and another Lawvere theory $\mathcal{K}$ is enriched in $\text{Mdl}_\mathcal{L}$, then $\mathcal{K}$ is naturally tensored over $\mathcal{L}$ via a map of Lawvere theories $$\mathcal{L}\otimes\mathcal{K}\rightarrow\mathcal{K}.$$
\end{theorem*}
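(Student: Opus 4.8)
The plan is to transport the whole problem across the fully faithful symmetric monoidal functor $\text{Mdl}$ and solve it inside $\text{Pr}^L_\ast$, where the theory of modules over a presentably symmetric monoidal $\infty$-category is available. First I would record that a symmetric monoidal structure on $\mathcal{L}$ compatible with finite products is exactly a commutative algebra structure on $\mathcal{L}$ in $(\text{Lwv},\otimes)$. Applying the symmetric monoidal functor $\text{Mdl}$ (Theorems \ref{Thm1}, \ref{Thm2}) sends this to a commutative algebra structure on $\text{Mdl}_\mathcal{L}$ in $(\text{Pr}^L_\ast,\otimes)$; by Corollary \ref{Cor1} the resulting monoidal structure is the Day convolution, which is \emph{closed}. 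Write $\mathcal{V}:=\text{Mdl}_\mathcal{L}$ for this presentably symmetric monoidal $\infty$-category. Since $\text{Mdl}$ is symmetric monoidal we have $\text{Mdl}(\mathcal{L}\otimes\mathcal{K})\simeq\mathcal{V}\otimes\text{Mdl}_\mathcal{K}$, and since it is fully faithful, producing the desired map of Lawvere theories $\mathcal{L}\otimes\mathcal{K}\to\mathcal{K}$ — together with the coherences making it an $\mathcal{L}$-action — is equivalent to producing a $\mathcal{V}$-module structure on $\text{Mdl}_\mathcal{K}$ in $\text{Pr}^L_\ast$ whose structure maps all lie in the image of $\text{Mdl}$. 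Concretely, full faithfulness gives $\text{Map}_{\text{Lwv}}(\mathcal{L}\otimes\mathcal{K},\mathcal{K})\simeq\text{Map}_{\text{Pr}^L_\ast}(\mathcal{V}\otimes\text{Mdl}_\mathcal{K},\text{Mdl}_\mathcal{K})$, and the same reasoning extended to the higher coherences identifies $\mathcal{L}$-modules in $\text{Lwv}$ with the full subcategory of $\mathcal{V}$-modules in $\text{Pr}^L_\ast$ lying in the image of $\text{Mdl}$.

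The crux is then to manufacture the $\mathcal{V}$-module structure on $\text{Mdl}_\mathcal{K}$ out of the given enrichment of $\mathcal{K}$ in $\mathcal{V}$. Here I would invoke the principle, from the theory of enriched $\infty$-categories (Gepner--Haugseng, Hinich), that the free cocompletion of a $\mathcal{V}$-enriched $\infty$-category is naturally tensored, cotensored, and enriched over $\mathcal{V}$, i.e. a $\mathcal{V}$-module in $\text{Pr}^L$. Because $\text{Mdl}_\mathcal{K}$ is exactly such a free cocompletion — the adjunction of Theorem \ref{Thm2} exhibits it as the free cocompletion of $\mathcal{K}^\text{op}$, whose finitely generated free objects recover $\mathcal{K}^\text{op}$ by Proposition \ref{PropYoneda} — the enrichment of $\mathcal{K}$ in $\mathcal{V}$ should propagate to a genuine $\mathcal{V}$-tensoring of $\text{Mdl}_\mathcal{K}$. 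The closedness of the Day convolution (Corollary \ref{Cor1}) is what guarantees this tensoring preserves colimits in the $\mathcal{V}$-variable, so that $\text{Mdl}_\mathcal{K}$ becomes an honest $\mathcal{V}$-module in $\text{Pr}^L$ rather than merely a category carrying a weak action.

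Finally I would descend. Since $\text{Mdl}_\mathcal{K}=\text{Mdl}(\mathcal{K})$ and $\mathcal{V}=\text{Mdl}(\mathcal{L})$ are in the image of the fully faithful symmetric monoidal $\text{Mdl}$, the module structure constructed above, once shown to have all its structure maps in the image, corresponds by the identification of the first paragraph to an $\mathcal{L}$-module structure on $\mathcal{K}$ in $\text{Lwv}$, which is precisely a map $\mathcal{L}\otimes\mathcal{K}\to\mathcal{K}$ together with its unit and associativity constraints. Equivalently, one restricts the $\mathcal{V}$-action along the symmetric monoidal inclusion of representables $\mathcal{L}^\text{op}\hookrightarrow\mathcal{V}$ — which is monoidal exactly because $\mathcal{L}$ is symmetric monoidal and representables are closed under Day convolution — and passes to finitely generated free objects on each side.

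I expect the main obstacle to be the verification underlying this last step: that the $\mathcal{V}$-tensoring produced from the enrichment genuinely restricts to the finitely generated free objects, i.e. that $\mathcal{V}\otimes\text{Mdl}_\mathcal{K}\to\text{Mdl}_\mathcal{K}$ carries products of finitely generated free objects to finitely generated free objects and preserves finite products in each variable. This is what upgrades a map of presentable categories to a map of \emph{Lawvere theories}, so that the descent along $\text{Mdl}$ applies. One must also be careful here with variance, tracking $\mathcal{K}$ against $\mathcal{K}^\text{op}$ through the enriched Yoneda embedding; once this bookkeeping is settled, the coherence data of the action comes for free from full faithfulness.
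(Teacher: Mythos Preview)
Your approach is genuinely different from the paper's, and it overreaches in a way that leads to a real gap. The paper proves only what is stated: the existence of a map of Lawvere theories $\mathcal{L}\otimes\mathcal{K}\to\mathcal{K}$ satisfying the displayed formula. It does this directly and elementarily: take the enriched Hom $\text{Map}^{\text{enr}}_\mathcal{K}:\mathcal{K}^\text{op}\times\mathcal{K}\to\text{Mdl}_\mathcal{L}$, use conservativity of $\text{ev}_1:\text{Mdl}_\mathcal{L}\to\text{Top}$ (Theorem~\ref{ThmGGN}) to see it preserves finite products in the second variable, curry to obtain a product-preserving $\phi:\mathcal{L}\otimes\mathcal{K}\to\text{Fun}(\mathcal{K}^\text{op},\text{Top})$, and then compute $\phi(n)\cong\text{Map}_\mathcal{K}(-,n)$ so that $\phi$ lands in the Yoneda image $\mathcal{K}$. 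No passage through $\text{Pr}^L$, no enriched cocompletions, no coherence data.

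By contrast, you try to manufacture a full $\mathcal{L}$-module structure on $\mathcal{K}$, which is exactly Conjecture~\ref{ConjLwv}; the paper explicitly leaves this open. The specific gap is the sentence ``Because $\text{Mdl}_\mathcal{K}$ is exactly such a free cocompletion\ldots'': the Gepner--Haugseng/Hinich principle you invoke concerns the \emph{$\mathcal{V}$-enriched} presheaf category $\text{Fun}_\mathcal{V}((\mathcal{K}^{\text{enr}})^\text{op},\mathcal{V})$, whereas $\text{Mdl}_\mathcal{K}=\text{Fun}^\times(\mathcal{K},\text{Top})$ is the \emph{unenriched} sifted cocompletion of $\mathcal{K}^\text{op}$. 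Identifying these two is not automatic and is not supplied by Theorem~\ref{Thm1} or Proposition~\ref{PropYoneda}. Without that identification you do not know that the $\mathcal{V}$-tensoring you build is the one on $\text{Mdl}_\mathcal{K}$, and hence your descent step---checking that the action restricts to finitely generated free objects---has nothing to descend. Your own closing paragraph flags this obstacle; the paper's direct argument simply sidesteps it by never leaving $\mathcal{K}$ and its presheaves.
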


\noindent This theorem suggests that there may be a strong converse to Corollary \ref{CorEnr}:

\begin{conjecture*}[Conjecture \ref{ConjLwv}]
If $\mathcal{L}$ is as above, the $\infty$-categories of $\text{Mdl}_\mathcal{L}$-enriched Lawvere theories and $\mathcal{L}$-module Lawvere theories are equivalent.
\end{conjecture*}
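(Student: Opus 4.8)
The plan is to realize both $\infty$-categories as full subcategories of $\text{Mod}_{\text{Mdl}_\mathcal{L}}(\text{Pr}^L_\ast)$, the $\infty$-category of presentable $\text{Mdl}_\mathcal{L}$-modules, and then to check that these two subcategories have the same essential image. The guiding principle is that, for presentable $\infty$-categories, being \emph{tensored} over a presentable symmetric monoidal $\mathcal{V}$ and being \emph{closed-enriched} over $\mathcal{V}$ carry the same information; so the notions of ``$\mathcal{L}$-module'' and ``$\text{Mdl}_\mathcal{L}$-enriched'' should become literally the same once we pass from small Lawvere theories to their presentable categories of models.

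First, since Mdl is symmetric monoidal (Theorem \ref{Thm2}), it carries the commutative algebra object $\mathcal{L}$ to the commutative algebra $\text{Mdl}_\mathcal{L}$ and induces a functor on module categories
$$\text{Mod}_\mathcal{L}(\text{Lwv}) \longrightarrow \text{Mod}_{\text{Mdl}_\mathcal{L}}(\text{Pr}^L_\ast), \qquad \mathcal{K} \longmapsto \text{Mdl}_\mathcal{K}.$$
Because Mdl is fully faithful and preserves the relevant tensor products, this functor on modules is again fully faithful: a module map is a map of underlying objects compatible with the action $\mathcal{L}\otimes\mathcal{K}\to\mathcal{K}$, and full faithfulness of Mdl identifies the relevant mapping spaces with those of $\text{Mdl}_\mathcal{L}$-module maps $\text{Mdl}_\mathcal{L}\otimes\text{Mdl}_\mathcal{K}\to\text{Mdl}_\mathcal{K}$. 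I would record this as a general lemma — a fully faithful symmetric monoidal functor induces fully faithful functors on $\text{CAlg}$ and on every $\text{Mod}_A$ (compare \cite{HA} 4.8). This describes the left-hand side of the conjecture, the $\mathcal{L}$-module Lawvere theories, as the essential image of this embedding.

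Next, I would interpret the right-hand side through the equivalence between presentable modules and closed enrichments: via the adjoint functor theorem a presentable $\text{Mdl}_\mathcal{L}$-module is canonically closed-enriched over $(\text{Mdl}_\mathcal{L},\otimes_{\text{Day}})$, and conversely. This is precisely the mechanism behind Corollary \ref{CorEnr} and Theorem \ref{ThmEnr}: the former produces a $\text{Mdl}_\mathcal{L}$-enrichment from a module structure, while the latter reverses this by extracting the action map $\mathcal{L}\otimes\mathcal{K}\to\mathcal{K}$. The task is to upgrade this pair from ``mutually inverse on objects'' to an equivalence of $\infty$-categories, and to verify that, under the embedding of the first step, a $\text{Mdl}_\mathcal{L}$-enriched Lawvere theory and the $\mathcal{L}$-module it determines correspond to the \emph{same} presentable $\text{Mdl}_\mathcal{L}$-module $\text{Mdl}_\mathcal{K}$. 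For this I would develop an enriched analogue of Theorems \ref{Thm1}, \ref{Thm2}: a $\text{Mdl}_\mathcal{L}$-enriched Lawvere theory should possess a presentable $\infty$-category of enriched models, and an enriched Yoneda argument should recover it as the finitely generated free objects, paralleling the unenriched identity $\mathcal{K}\simeq(\text{Mdl}_\mathcal{K})_{\text{fgf}}^{\text{op}}$.

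The main obstacle is precisely this enriched universal property. The clean equivalence between tensoring and enriching holds only for large presentable categories, whereas Lawvere theories are small, so the correspondence must be transported across the passage $\mathcal{K}\rightleftarrows\text{Mdl}_\mathcal{K}$; this requires knowing that the enriched free cocompletion of a $\text{Mdl}_\mathcal{L}$-enriched Lawvere theory is a presentable $\text{Mdl}_\mathcal{L}$-module whose finitely generated free objects reconstitute $\mathcal{K}$ \emph{as an enriched category}. Building a robust theory of enriched Lawvere theories — with finite enriched products, an enriched Yoneda lemma, and the resulting universal property — and then matching its output with the essential image computed above is the crux. This is exactly what separates the partial results already in hand (Corollary \ref{CorEnr}, Theorem \ref{ThmEnr}) from the full equivalence, and is why the statement is posed as a conjecture rather than a theorem.
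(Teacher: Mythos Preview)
The paper does not contain a proof of this statement: it is explicitly labeled a conjecture (Conjecture~\ref{ConjLwv}), and the surrounding text presents Corollary~\ref{CorEnr} and Theorem~\ref{ThmEnr} as evidence pointing toward it, not as ingredients of a completed argument. So there is no ``paper's own proof'' to compare against.

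That said, your outline is a sensible research strategy and is compatible with the paper's viewpoint. You correctly identify the mechanism: the symmetric monoidal, fully faithful embedding $\text{Mdl}:\text{Lwv}\hookrightarrow\text{Pr}^L_\ast$ should let one transport the ``tensored $\Leftrightarrow$ closed-enriched'' correspondence for presentable $\infty$-categories down to small Lawvere theories. You also correctly isolate the genuine gap, namely the missing enriched analogue of Theorem~\ref{Thm1} (an enriched Yoneda/free-cocompletion statement identifying a $\text{Mdl}_\mathcal{L}$-enriched Lawvere theory with the finitely generated free objects in its enriched model category). Your final paragraph is honest about this, and it matches the reason the paper leaves the statement as a conjecture.

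One addition worth noting: the paper does establish the conjecture in the special case $\mathcal{L}=\text{Burn}$, as the Remark following Theorem~\ref{ThmBurn} explains. The method there is not your general two-subcategories-of-$\text{Mod}_{\text{Mdl}_\mathcal{L}}(\text{Pr}^L_\ast)$ strategy; instead it exploits the idempotency $\text{Burn}\otimes\text{Burn}\simeq\text{Burn}$ (equivalently $\text{Sp}_{\geq 0}\otimes\text{Sp}_{\geq 0}\simeq\text{Sp}_{\geq 0}$) to force $\text{Mod}_\text{Burn}$ to be a full subcategory of $\text{SymMonCat}_\infty$, and then checks directly that this subcategory coincides with additive $\infty$-categories. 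That argument does not obviously generalize to arbitrary $\mathcal{L}\in\text{CAlg}(\text{Lwv}^\otimes)$, since most such $\mathcal{L}$ are not idempotent; your approach is the more plausible route to the general statement, but, as you recognize, it is not yet a proof.
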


\noindent Sections 4 and 5 are devoted to examples and applications:

For any $\mathbb{E}_1$-semiring space $R$, there is a Lawvere theory whose models are $R$-modules. In Section 4, we show that these are the only semiadditive Lawvere theories; that is, semiadditive Lawvere theories can be identified with semiring spaces. This is an easy result for classical Lawvere theories. We record it here because it is slightly more subtle for $\infty$-categories. It also suggests a philosophy that we are fond of: Lawvere theories may be regarded as generalized (non-additive) rings.

In Section 5.1, we use our ideas relating module Lawvere theories to enriched Lawvere theories, proving a result promised in the author's earlier paper \cite{Berman}:

\begin{theorem*}[Theorem \ref{ThmBurn}]
The Burnside $\infty$-category is a commutative semiring $\infty$-category, and there is an equivalence $$\text{Mod}_\text{Burn}\cong\text{AddCat}_\infty,$$ where $\text{AddCat}_\infty$ denotes the $\infty$-category of additive $\infty$-categories.
\end{theorem*}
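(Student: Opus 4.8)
The plan is to leverage the symmetric monoidal full faithfulness of $\text{Mdl}$ (Theorems \ref{Thm1} and \ref{Thm2}) to transport the problem into the better-understood $\infty$-category $\text{Pr}^L_\ast$, where additive categories are already known to be modules over connective spectra. First I would pin down the semiring structure by realizing $\text{Burn}$ in two complementary ways: as the span category $\text{Span}(\text{Fin})$, and --- via the right adjoint of $\text{Mdl}$, which recovers a theory as the finitely generated free objects of its models --- as $(\text{Sp}_{\geq 0})^{\text{op}}_{\text{fgf}}$, the opposite of the full subcategory of connective spectra on the objects $\bigoplus^n\mathbb{S}$. The finite products of $\text{Burn}$ are the biproducts $\oplus$ coming from disjoint union of finite sets, while the Cartesian product of finite sets descends to a second, symmetric monoidal structure $\otimes$ distributing over $\oplus$; this is exactly a symmetric monoidal structure compatible with finite products. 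By Corollary \ref{CorEnr} this makes $\text{Burn}$ a semiring $\infty$-category, and commutativity follows from the symmetry of the Cartesian product. This establishes the first assertion.

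Next I would identify the models. Because $\text{Sp}_{\geq 0}$ is generated under colimits by the sphere, it lies in the colocalization $\text{Lwv}\subseteq\text{Pr}^L_\ast$, so the counit of the adjunction gives $\text{Mdl}_{\text{Burn}}\simeq\text{Sp}_{\geq 0}$, the $\infty$-category of connective spectra, i.e.\ grouplike commutative monoids in $\text{Top}$. The crucial point --- the source of \emph{additivity} rather than mere semiadditivity --- is that the Burnside mapping objects are group-complete, with $\pi_0\text{End}(1)=\mathbb{Z}$, so that models are grouplike; I would make this precise through the infinite loop space identification of grouplike $\mathbb{E}_\infty$-spaces with $\text{Sp}_{\geq 0}$. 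I would also check that the Day convolution of Corollary \ref{Cor1} matches the smash product of connective spectra, both being the universal multiplication bilinear with respect to $\oplus$ and with unit $\mathbb{S}$, so that $\text{Mdl}_{\text{Burn}}\simeq\text{Sp}_{\geq 0}$ is an equivalence of presentably symmetric monoidal $\infty$-categories.

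The main step is then the passage to modules. Since $\text{Mdl}$ is symmetric monoidal and fully faithful, it carries the commutative algebra $\text{Burn}$ to the commutative algebra $\text{Sp}_{\geq 0}$ and induces a fully faithful functor $\text{Mod}_{\text{Burn}}\to\text{Mod}_{\text{Sp}_{\geq 0}}(\text{Pr}^L_\ast)$. By the theorem of Gepner, Groth, and Nikolaus, the target is the $\infty$-category of additive presentable $\infty$-categories, which I would take as the model for $\text{AddCat}_\infty$. It remains to identify the essential image, and here I would use the enrichment results in the special case $\mathcal{L}=\text{Burn}$: Corollary \ref{CorEnr} shows every $\text{Burn}$-module Lawvere theory is enriched in $\text{Mdl}_{\text{Burn}}=\text{Sp}_{\geq 0}$, hence additive, while Theorem \ref{ThmEnr} shows conversely that an additive --- equivalently $\text{Sp}_{\geq 0}$-enriched --- Lawvere theory is tensored over $\text{Burn}$, so is a $\text{Burn}$-module. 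Matching these two directions yields the equivalence $\text{Mod}_{\text{Burn}}\cong\text{AddCat}_\infty$ on the nose.

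I expect the main obstacle to be precisely this last identification of the essential image: verifying that the abstract equivalence $\text{Mod}_{\text{Sp}_{\geq 0}}(\text{Pr}^L_\ast)\cong\text{AddCat}_\infty$ restricts to an equivalence of the \emph{Lawvere-theoretic} module categories, which amounts to establishing the relevant instance of Conjecture \ref{ConjLwv} directly (via the external input of Gepner--Groth--Nikolaus) rather than in full generality. The secondary subtlety lies in the model computation: ensuring the Burnside enrichment is genuinely grouplike, so that one lands in additive and not merely semiadditive $\infty$-categories, and that the Day convolution is correctly matched with the smash product of connective spectra.
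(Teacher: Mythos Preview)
Your plan targets a different statement than the one in Theorem~\ref{ThmBurn}. There $\text{Mod}_\text{Burn}$ means modules over the semiring $\infty$-category $\text{Burn}$ inside $\text{SymMonCat}_\infty$, and $\text{AddCat}_\infty$ means \emph{all} additive $\infty$-categories; the theorem asserts these coincide as full subcategories of $\text{SymMonCat}_\infty$. Your argument instead takes modules inside $\text{Lwv}$ and redefines $\text{AddCat}_\infty$ to mean presentable additive $\infty$-categories. Even granting that reinterpretation, the essential image of $\text{Mdl}$ on $\text{Mod}_\text{Burn}(\text{Lwv})$ is not all of $\text{Mod}_{\text{Sp}_{\geq 0}}(\text{Pr}^L_\ast)$ but only those objects satisfying the criterion of Theorem~\ref{ThmGGN}; what you would end up proving is the Lawvere-theoretic statement the paper records as a \emph{consequence} of Theorem~\ref{ThmBurn} (namely Conjecture~\ref{ConjLwv} for $\mathcal{L}=\text{Burn}$), not the theorem itself.

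More seriously, your converse direction has a genuine gap. Theorem~\ref{ThmEnr} produces only a single morphism $\text{Burn}\otimes\mathcal{K}\to\mathcal{K}$; it does not supply the associativity and unit coherences of a module structure, so ``tensored over $\text{Burn}$'' does not yet mean ``$\text{Burn}$-module''. The paper closes this gap by a device you never invoke: it first shows $\text{Burn}\otimes\text{Burn}\to\text{Burn}$ is an equivalence, transported from the idempotency of $\text{Sp}_{\geq 0}$ in $\text{Pr}^L$ via the symmetric monoidal full faithfulness of $\text{Mdl}$. This makes being a $\text{Burn}$-module a \emph{property}, so $\text{Mod}_\text{Burn}\to\text{SymMonCat}_\infty$ is fully faithful. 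The converse then bypasses Theorem~\ref{ThmEnr} entirely: for an arbitrary additive $\mathcal{C}$ one passes to the presentable additive $\infty$-category $\mathcal{P}(\mathcal{C})$, uses Gepner--Groth--Nikolaus to make it an $\text{Sp}_{\geq 0}$-module in $\text{Pr}^L$, restricts along $\text{Burn}\hookrightarrow\text{Sp}_{\geq 0}$ and along the lax monoidal forgetful functor $\text{Pr}^L\to\text{SymMonCat}_\infty$, and finally restricts to the full subcategory $\mathcal{C}\subseteq\mathcal{P}(\mathcal{C})$. Idempotency is precisely what lets these restrictions inherit coherent module structures without further argument.
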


\noindent Finally, in Section 5.2, we describe an application to equivariant homotopy theory. There are no results; at this point, the application is just motivational.

\subsection{Acknowledgment}
\noindent This paper is largely drawn from the author's thesis \cite{Thesis}, of which it is the second part. It has been in the works for years, and benefitted from conversations with many others, including Ben Antieau, Clark Barwick, Saul Glasman, Rune Haugseng, Mike Hill, Bogdan Krstic, and others.

\section{Fundamentals of Lawvere theories}
\subsection{Lawvere theories and their models}
\noindent Classical Lawvere theories are one of the earliest formulations of algebraic theory, dating to Lawvere's 1963 thesis \cite{Lawvere}, and they have been thoroughly studied since then; see \cite{ARV}. In the setting of $\infty$-categories, Lawvere theories have been studied by just a few authors, notably by Cranch \cite{Cranch} and Gepner-Groth-Nikolaus \cite{GGN}, and in the prequel to this paper \cite{Berman}.

The literature in this area is sparse in part because Lurie's book Higher Algebra \cite{HA} founds the subject on operads, instead. That approach is now well-developed, and it has many benefits, but it suffers from the drawback of being pedagogically unwieldy. Even for those who are already invested in operads, like many homotopy theorists, there is a high barrier of entry: Even the definitions of fundamental objects like $\infty$-operads, commutative algebras, and symmetric monoidal $\infty$-categories presuppose a deep understanding of the quasicategory model.

On the other hand, there are surely those who would like to use this machinery without requiring operads at all.

In contrast, the Lawvere theoretic approach can be described in two sentences, and independent of our model of $\infty$-category:

\begin{definition}
A \emph{Lawvere theory} is an $\infty$-category $\mathcal{L}$ which is closed under finite products and generated under finite products by a single distinguished object $1$. An algebra or \emph{model} of $\mathcal{L}$ in $\mathcal{C}$ is a functor $\mathcal{L}\rightarrow\mathcal{C}$ which preserves finite products.
\end{definition}

\noindent Typically, we want to take $\mathcal{C}$ to be an $\infty$-category which is presentable and cartesian closed, such as Set, Top, or $\text{Cat}_\infty$, but there is no such requirement. By default, we take $\mathcal{C}$ to be the $\infty$-category Top of CW complexes, or homotopy types, which is the initial object among $\infty$-categories which are presentable and closed symmetric monoidal. We write $$\text{Mdl}_\mathcal{L}=\text{Fun}^\times(\mathcal{L},\text{Top}).$$

\begin{example}
Let Fin denote the category of finite sets. Then $\text{Fin}^\text{op}$ is a Lawvere theory, with product given by disjoint union of sets, and evaluation at the singleton $$\text{Mdl}_{\text{Fin}^\text{op}}\rightarrow\text{Top}$$ is an equivalence of $\infty$-categories. We say $\text{Fin}^\text{op}$ is the \emph{trivial Lawvere theory}.
\end{example}

\begin{example}
Let $\text{Burn}^\text{eff}$ denote the effective Burnside 2-category, whose objects are finite sets. For finite sets $X,Y$, the groupoid of morphisms from $X$ to $Y$ is the groupoid of span diagrams $X\leftarrow T\rightarrow Y$, and composition is via pullback. Then $\text{Burn}^\text{eff}$ is a Lawvere theory, with product given by disjoint union of sets.

If $f:\text{Burn}^\text{eff}\rightarrow\text{Top}$ is a model, then the spans $0\xleftarrow{=} 0\rightarrow 1$, respectively $2\xleftarrow{=} 2\rightarrow 1$ endow $f(1)$ with a distinguished point, respectively a binary operation $f(1)\times f(1)\rightarrow f(1)$. Composition in $\text{Burn}^\text{eff}$ precisely enforces the structure of a commutative (or $\mathbb{E}_\infty$) monoid on $f(1)$, and evaluation at $1$ $$\text{Mdl}_{\text{Burn}^\text{eff}}\rightarrow\text{CMon}_\infty$$ is an equivalence of $\infty$-categories. See \cite{Berman} Remark 3.6.

We say $\text{Burn}^\text{eff}$ is the commutative (or $\mathbb{E}_\infty$) Lawvere theory.
\end{example}

\noindent In fact, for any cartesian monoidal $\mathcal{C}^\times$, commutative monoids in $\mathcal{C}$ are equivalent to models of $\text{Burn}^\text{eff}$ in $\mathcal{C}$ (\cite{Berman} 3.6). Applying this to \cite{HA} 2.4.2.4:

\begin{example}\label{ExSMCat}
A symmetric monoidal $\infty$-category may be regarded as a functor $\text{Burn}^\text{eff}\rightarrow\text{Cat}_\infty$ which preserves finite products. In contrast to Lurie's definition of a symmetric monoidal $\infty$-category, this construction is elementary: In order to understand it, we need only understand how to take finite products in an $\infty$-category (they are the same as products in the homotopy category!), and how to regard a 2-category such as $\text{Burn}^\text{eff}$ as an $\infty$-category.
\end{example}

\noindent More generally, given any $\infty$-operad $\mathcal{O}$, there is a Lawvere theory $\mathcal{L}$ such that $$\text{Alg}_\mathcal{O}(\mathcal{C}^\times)\cong\text{Mdl}_\mathcal{L}(\mathcal{C}^\times)$$ for any cartesian monoidal $\mathcal{C}^\times$. The Lawvere theory can even be more-or-less explicitly described in terms of $\mathcal{O}$ (\cite{Berman} 3.16). \\

\noindent It may appear that Lawvere theories are less general than operads because they apply only to cartesian monoidal $\infty$-categories. This is apparently a significant obstacle: a major application of operads is to understanding multiplicative structure on rings. For example, a ring spectrum is an algebra in spectra under \emph{smash product}. However, this problem can be easily overcome, as long as we restrict attention to \emph{connective} ring spectra:

\begin{example}
For any connective ring spectrum $R$, there are Lawvere theories whose models are equivalent to $\text{Mod}_R^{\geq 0}$, $\text{Alg}_R^{\geq 0}$, and $\text{CAlg}_R^{\geq 0}$.
\end{example}

\noindent This example and others like it follow from a result of Gepner, Groth, and Nikolaus \cite{GGN} which describes \emph{exactly} which $\infty$-categories are equivalent to $\text{Mdl}_\mathcal{L}$ for some Lawvere theory $\mathcal{L}$ (Theorem \ref{ThmGGN} below).

Before stating their result, we recall the theory of presentable $\infty$-categories. By the adjoint functor theorem (\cite{HTT} 5.5.2.9), a functor $\mathcal{C}\rightarrow\mathcal{D}$ between presentable $\infty$-categories preserves small colimits if and only if it has a right adjoint. We write $\text{Pr}^L$ for the $\infty$-category of presentable $\infty$-categories along with these \emph{left adjoint} functors.

If $\mathcal{C}\in\text{Pr}^L$, then since Top is freely generated by one object under colimits, the following data are equivalent:
\begin{itemize}
\item an object $X\in\mathcal{C}$ (we say $\mathcal{C}$ is \emph{pointed});
\item a left adjoint functor $L=-\otimes X:\text{Top}\rightarrow\mathcal{C}$ (we say $L(S)$ is the \emph{free} object on $S$);
\item a right adjoint functor $R=\text{Map}(X,-):\mathcal{C}\rightarrow\text{Top}$ (we say $R(Y)$ is the \emph{underlying space} of $Y$).
\end{itemize}

\noindent We will denote by $\text{Pr}^L_\ast$ the $\infty$-category of these pointed presentable $\infty$-categories, along with left adjoint basepoint-preserving functors. (Formally, $\text{Pr}^L_\ast\cong\text{Pr}^L_{\text{Top}/}$.)

If $\mathcal{L}$ is a Lawvere theory, generated by the distinguished object $1$, then we regard $\text{Mdl}_\mathcal{L}$ as canonically pointed by the right adjoint forgetful functor $$\text{evaluate at 1}:\text{Mdl}_\mathcal{L}\rightarrow\text{Top}.$$ By the Yoneda lemma, the corresponding basepoint is the model $$\text{Map}(1,-):\mathcal{L}\rightarrow\text{Top}.$$

\begin{theorem}[Gepner-Groth-Nikolaus \cite{GGN} Theorem B.7]\label{ThmGGN}
A pointed presentable $\infty$-category $\mathcal{C}$ is equivalent to $\text{Mdl}_\mathcal{L}$ for some Lawvere theory $\mathcal{L}$ if and only if the forgetful functor $\mathcal{C}\rightarrow\text{Top}$ is conservative and preserves geometric realizations.
\end{theorem}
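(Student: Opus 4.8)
I would prove the two directions separately, with necessity a short computation and sufficiency the real content. For \emph{necessity}, suppose $\mathcal{C}\simeq\text{Mdl}_\mathcal{L}=\text{Fun}^\times(\mathcal{L},\text{Top})$ with forgetful functor $\text{ev}_1$. Since $\mathcal{L}$ is generated under finite products by $1$, every object is a finite power $1^{\times n}$, and any model $M$ satisfies $M(1^{\times n})\simeq M(1)^{\times n}$; hence a map of models is an equivalence as soon as it is one after evaluation at $1$, which gives conservativity. For geometric realizations, I would use that colimits in $\text{Fun}(\mathcal{L},\text{Top})$ are computed pointwise, together with the fact that sifted colimits commute with finite products in $\text{Top}$: this shows that $\text{Fun}^\times(\mathcal{L},\text{Top})$ is closed under sifted colimits in $\text{Fun}(\mathcal{L},\text{Top})$, that these are again pointwise, and therefore that $\text{ev}_1$ preserves all sifted colimits, in particular geometric realizations.

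For \emph{sufficiency}, let $F\dashv U$ be the free--forgetful adjunction determined by the basepoint $X=F(\ast)\in\mathcal{C}$, and set $\mathcal{L}:=\mathcal{C}_\text{fgf}^\text{op}$, the opposite of the full subcategory on the finitely generated free objects $F(n)=X^{\coprod n}$ for $n\in\text{Fin}$. Because $F$ preserves coproducts we have $F(m)\coprod F(n)\simeq F(m+n)$, so $\mathcal{C}_\text{fgf}$ has finite coproducts and is generated by $X$ under them; dually $\mathcal{L}$ has finite products and is generated by $1$, so it is a Lawvere theory. Using the identification of $\text{Mdl}_\mathcal{L}$ with the free sifted cocompletion $\mathcal{P}_\Sigma(\mathcal{C}_\text{fgf})$ of $\mathcal{C}_\text{fgf}=\mathcal{L}^\text{op}$ (\cite{HTT} 5.3.6.10), the finite-coproduct-preserving inclusion $\iota:\mathcal{C}_\text{fgf}\hookrightarrow\mathcal{C}$ extends uniquely to a colimit-preserving functor $L:\text{Mdl}_\mathcal{L}\to\mathcal{C}$, which by the adjoint functor theorem (\cite{HTT} 5.5.2.9) has a right adjoint $R$, namely the restricted Yoneda functor $R(Y)=\text{Map}_\mathcal{C}(\iota(-),Y)$; one checks directly that $\text{ev}_1\circ R\simeq U$. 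The goal is then to prove that $L$ is an equivalence.

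The soft half is quick: since $\text{ev}_1\circ R\simeq U$ is conservative, so is $R$, and $L$ is essentially surjective because its essential image is closed under colimits and contains the generator $X$. A triangle-identity argument then reduces everything to showing that the unit $\text{id}\to RL$ is an equivalence, i.e. that $L$ is fully faithful. By the universal property of $\mathcal{P}_\Sigma$, and since $\iota$ is already fully faithful on representables, this comes down to whether, for each $n$, the functor $\text{Map}_\mathcal{C}(F(n),-)$ commutes with the sifted colimits computing objects of $\mathcal{C}$ — equivalently, whether each $F(n)$ is a compact projective object of $\mathcal{C}$.

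This last equivalence is the crux, and is where the hypotheses enter. Projectivity of $F(n)$ is exactly the geometric-realization hypothesis: $\text{Map}_\mathcal{C}(F(n),-)\simeq U^{\times n}$ is a finite product of copies of $U$, and since geometric realizations commute with finite products in $\text{Top}$, preservation of geometric realizations passes from $U$ to each $U^{\times n}$. The delicate step, which I expect to be the main obstacle, is \emph{compactness}: one must show that $U$ additionally preserves filtered colimits, so that the free generators are compact and the monad $UF$ is finitary — this is precisely what forces the reconstruction to land among Lawvere theories (finitary monads) rather than among models for an infinitary theory, and it is genuinely more subtle for $\infty$-categories than classically. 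I would establish it by leveraging presentability together with the fact that $\mathcal{C}$ is generated under colimits by the single object $X$; equivalently, the whole sufficiency argument can be routed through Barr--Beck--Lurie monadicity (\cite{HA} 4.7), identifying $\mathcal{C}\simeq\text{LMod}_{UF}(\text{Top})$ and then recognizing $UF$ as the finitary monad presented by the operation spaces $\text{Map}_\mathcal{C}(X,F(n))$.
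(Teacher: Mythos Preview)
The paper does not give its own proof of this theorem; it is stated with attribution to \cite{GGN} Theorem B.7 and used as a black box throughout. So there is no argument in the paper to compare against, and I comment on your sketch directly.

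Your necessity argument is correct and standard. For sufficiency, the architecture via $\mathcal{P}_\Sigma(\mathcal{C}_\text{fgf})$ and the restricted Yoneda adjunction is the right one, and you correctly isolate the crux: to get $L$ fully faithful you need each $F(n)$ to be \emph{compact} projective, i.e.\ $U\simeq\text{Map}_\mathcal{C}(X,-)$ must preserve filtered colimits in addition to geometric realizations. But your proposal does not actually establish this. ``Leveraging presentability together with the fact that $\mathcal{C}$ is generated under colimits by $X$'' buys you only that $U$ is $\kappa$-accessible for some regular cardinal $\kappa$, hence that $X$ is $\kappa$-compact---not that it is $\omega$-compact. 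The Barr--Beck--Lurie route you offer as an alternative has the same gap: monadicity gives $\mathcal{C}\simeq\text{LMod}_{UF}(\text{Top})$ with $UF$ preserving geometric realizations, but recognizing this as the monad associated to a \emph{finite}-product Lawvere theory (equivalently, showing the free algebra functor lands in $\mathcal{P}_\Sigma$ of the finitely generated free objects) again requires $UF$ to be finitary, i.e.\ to preserve filtered colimits.

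This is a genuine gap, not a matter of exposition. By \cite{HTT} 5.5.8.15, preserving sifted colimits is equivalent to preserving filtered colimits \emph{and} geometric realizations, and in general these are independent conditions. So either the hypothesis in the statement should be read as ``preserves sifted colimits'' (which is how such results are usually formulated, and which would make your argument go through immediately), or there is an additional argument---specific to the target $\text{Top}$ or to the situation at hand---upgrading preservation of geometric realizations to preservation of filtered colimits. You have correctly located the difficulty, but neither of your two proposed resolutions supplies that missing step; you should check \cite{GGN} for what they actually assume and how they handle it.
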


\subsection{Reconstructing Lawvere theories from their models}
\noindent We have just seen that many $\infty$-categories $\mathcal{M}$ can be described as models over a Lawvere theory (roughly, those which are presentable and \emph{algebraic} in nature). We may now ask: is that Lawvere theory unique, and to what extent can it be recovered from $\mathcal{M}$?

If $\mathcal{L}$ admits finite products, then $\text{Mdl}_\mathcal{L}=\text{Fun}^\times(\mathcal{L},\text{Top})$ is a full subcategory, by definition, of the $\infty$-category of presheaves, $\mathcal{P}(\mathcal{L}^\text{op})=\text{Fun}(\mathcal{L},\text{Top})$. Moreover, every representable presheaf $\text{Map}(X,-):\mathcal{L}\rightarrow\text{Top}$ preserves any limits that exist in $\mathcal{L}$, and therefore is in $\text{Mdl}_\mathcal{L}$.

By the Yoneda lemma, then $\mathcal{L}^\text{op}$ is a full subcategory of $\text{Mdl}_\mathcal{L}$. (We called this a \emph{cartesian monoidal Yoneda lemma} in \cite{Berman} 3.7.)

If $\mathcal{L}$ is a Lawvere theory, we can explicitly identify $\mathcal{L}^\text{op}$ as a subcategory of $\text{Mdl}_\mathcal{L}$: The embedding $\mathcal{L}^\text{op}\subseteq\text{Mdl}_\mathcal{L}$ identifies $1^{\amalg n}\in\mathcal{L}^\text{op}$ with $\mathbb{I}^{\amalg n}\in\text{Mdl}_\mathcal{L}$. Here $\mathbb{I}$ is the distinguished object of $\text{Mdl}_\mathcal{L}$, so that $\mathbb{I}^{\amalg n}$ can also be regarded as the free model on $n$ generators. In conclusion:

\begin{proposition}\label{PropYoneda}
Suppose $\mathcal{M}$ is a pointed, presentable $\infty$-category with distinguished object $\mathbb{I}$. Let $\mathcal{M}_\text{fgf}$ be the full subcategory of finitely generated free objects; that is, those of the form $\mathbb{I}^{\amalg n}$ for integers $n\geq 0$. If $\mathcal{M}\cong\text{Mdl}_\mathcal{L}$ for some Lawvere theory $\mathcal{L}$, then $\mathcal{L}\cong\mathcal{M}_\text{fgf}^\text{op}$.
\end{proposition}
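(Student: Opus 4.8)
The plan is to transport the question across the given equivalence and then recognize $\mathcal{L}^\text{op}$ inside $\text{Mdl}_\mathcal{L}$ by means of the (cartesian monoidal) Yoneda embedding. Since $\mathcal{M}\cong\text{Mdl}_\mathcal{L}$ is an equivalence of \emph{pointed} presentable $\infty$-categories, it carries the distinguished object $\mathbb{I}$ to the distinguished model $\text{Map}_\mathcal{L}(1,-)$, and being an equivalence it preserves coproducts; hence it restricts to an equivalence between $\mathcal{M}_\text{fgf}$ and the full subcategory of $\text{Mdl}_\mathcal{L}$ spanned by the iterated coproducts of the basepoint. It therefore suffices to prove that this latter subcategory is equivalent to $\mathcal{L}^\text{op}$.

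For this I would use the Yoneda embedding $j:\mathcal{L}^\text{op}\hookrightarrow\mathcal{P}(\mathcal{L}^\text{op})=\text{Fun}(\mathcal{L},\text{Top})$, $X\mapsto\text{Map}_\mathcal{L}(X,-)$. As observed just before the statement, each representable preserves all limits existing in $\mathcal{L}$, in particular finite products, so $j$ factors through the full subcategory $\text{Mdl}_\mathcal{L}=\text{Fun}^\times(\mathcal{L},\text{Top})$; since $j$ is fully faithful, this exhibits $\mathcal{L}^\text{op}$ as a full subcategory of $\text{Mdl}_\mathcal{L}$. The remaining task is to match essential images, i.e.\ to show that the representables $\text{Map}_\mathcal{L}(X,-)$ are exactly the finitely generated free models $\mathbb{I}^{\amalg n}$.

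The key computation is the identification $\text{Map}_\mathcal{L}(1^{\times n},-)\simeq\mathbb{I}^{\amalg n}$, which I would prove by showing both objects corepresent the same functor on $\text{Mdl}_\mathcal{L}$. On one hand, the Yoneda lemma gives $\text{Map}_{\text{Mdl}_\mathcal{L}}(\text{Map}_\mathcal{L}(1^{\times n},-),f)\simeq f(1^{\times n})\simeq f(1)^{\times n}$, where the last equivalence is precisely the defining property that a model $f$ preserves finite products. On the other hand, since the basepoint $\mathbb{I}=\text{Map}_\mathcal{L}(1,-)$ corepresents the forgetful functor (evaluation at $1$), the free--forgetful adjunction gives $\text{Map}_{\text{Mdl}_\mathcal{L}}(\mathbb{I}^{\amalg n},f)\simeq f(1)^{\times n}$ as well. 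Thus the two objects corepresent the same functor and so are equivalent. Finally, because $\mathcal{L}$ is generated under finite products by $1$, every object of $\mathcal{L}$ is of the form $1^{\times n}$, so the essential image of $j$ is precisely $\{\mathbb{I}^{\amalg n}:n\geq 0\}$, which (transported back across the equivalence) is $\mathcal{M}_\text{fgf}$. Combining, $\mathcal{L}^\text{op}\simeq\mathcal{M}_\text{fgf}$, i.e.\ $\mathcal{L}\simeq\mathcal{M}_\text{fgf}^\text{op}$.

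I expect the main obstacle to be coherence bookkeeping rather than anything conceptual: one must ensure that $j$, which is contravariant, genuinely turns the finite \emph{products} $1^{\times n}$ of $\mathcal{L}$ into the finite \emph{coproducts} of the basepoint in the abstract category $\text{Mdl}_\mathcal{L}$, and that this holds naturally and compatibly in $n$. The corepresentability argument above settles a single $n$; to obtain an equivalence of $\infty$-categories — not merely a bijection on equivalence classes of objects — one has to upgrade these pointwise identifications to a natural equivalence, which is exactly what the Yoneda lemma supplies once it is phrased as a statement about the corepresented functors $\text{Mdl}_\mathcal{L}\rightarrow\text{Top}$. A secondary point to pin down is that the equivalence $\mathcal{M}\cong\text{Mdl}_\mathcal{L}$ is taken in $\text{Pr}^L_\ast$, so that basepoints (and hence all free objects) are matched on the nose.
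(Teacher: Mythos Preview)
Your proposal is correct and follows essentially the same approach as the paper: the paper's argument is the short discussion immediately preceding the proposition, which uses the Yoneda embedding to exhibit $\mathcal{L}^\text{op}$ as a full subcategory of $\text{Mdl}_\mathcal{L}$ and then identifies $1^{\amalg n}\in\mathcal{L}^\text{op}$ with $\mathbb{I}^{\amalg n}$. Your version is simply more explicit about the corepresentability computation and about transporting across the pointed equivalence $\mathcal{M}\cong\text{Mdl}_\mathcal{L}$, both of which the paper leaves implicit.
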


\noindent Theoretically, this proposition combined with Theorem \ref{ThmGGN} allow us to describe the Lawvere theories associated to any operad, modules over a ring, algebras over a ring, etc. -- \emph{provided we already understand the $\infty$-category of models}.

However, we may instead seek to describe the Lawvere theory first, and use it to construct some new $\infty$-category of models. (For example, we might want to do this for pedagogical purposes as in Example \ref{ExSMCat}.)

\begin{principle}\label{PrCombLwv}
Lawvere theories often have combinatorial descriptions, in which their objects are finite sets, morphisms are given by diagrams of finite sets, and products are given by disjoint union.
\end{principle}

\begin{example}
The commutative Lawvere theory $\text{Burn}^\text{eff}$ is equivalent to the 2-category of spans of finite sets.
\end{example}

\noindent We may revisit this principle in a future paper on \emph{combinatorial Lawvere theories}; for now, we will not emphasize it.

\subsection{Main theorem}
\noindent We have described how to pass back and forth between a Lawvere theory and its $\infty$-category of models. We will show this relation is exceptionally robust.

Let Lwv denote the $\infty$-category whose objects are Lawvere theories and morphisms are functors which preserve finite products and the distinguished object\footnote{Formally, Lwv is a full subcategory of pointed cartesian monoidal $\infty$-categories.}.

\begin{theorem}\label{Thm1}
There is an adjunction $$\text{Mdl}:\text{Lwv}\leftrightarrows\text{Pr}^L_\ast:(-)^\text{op}_\text{fgf},$$ and the left adjoint Mdl is fully faithful.
\end{theorem}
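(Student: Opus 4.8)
The plan is to deduce the adjunction from the universal property of $\text{Mdl}_{\mathcal{L}}$ as a free cocompletion, and then read off full faithfulness from Proposition \ref{PropYoneda}. Throughout, write $j\colon\mathcal{L}^{\text{op}}\hookrightarrow\text{Mdl}_{\mathcal{L}}$ for the (cartesian) Yoneda embedding, which sends $1^{\amalg n}$ to the free model $\mathbb{I}^{\amalg n}$ and hence corestricts to an equivalence $\mathcal{L}^{\text{op}}\xrightarrow{\sim}(\text{Mdl}_{\mathcal{L}})_{\text{fgf}}$; its opposite is the candidate unit $\eta_{\mathcal{L}}\colon\mathcal{L}\to(\text{Mdl}_{\mathcal{L}})^{\text{op}}_{\text{fgf}}$, natural in $\mathcal{L}$ because Yoneda is. By the standard criterion for adjunctions in terms of a unit transformation (\cite{HTT} 5.2.2.8), it suffices to show that for every $\mathcal{L}\in\text{Lwv}$ and $\mathcal{C}\in\text{Pr}^L_\ast$, precomposition with $\eta_{\mathcal{L}}$ induces an equivalence
\[
\text{Map}_{\text{Pr}^L_\ast}(\text{Mdl}_{\mathcal{L}},\mathcal{C})\ \xrightarrow{\ \sim\ }\ \text{Map}_{\text{Lwv}}(\mathcal{L},\mathcal{C}^{\text{op}}_{\text{fgf}}).
\]
This presupposes that $\mathcal{C}^{\text{op}}_{\text{fgf}}$ is a Lawvere theory and that $(-)^{\text{op}}_{\text{fgf}}$ is functorial; both are immediate, since a pointed left adjoint $\mathcal{C}\to\mathcal{D}$ preserves coproducts and sends $\mathbb{I}_{\mathcal{C}}$ to $\mathbb{I}_{\mathcal{D}}$, hence carries $\mathbb{I}_{\mathcal{C}}^{\amalg n}$ to $\mathbb{I}_{\mathcal{D}}^{\amalg n}$.

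The equivalence is built as a chain. First, since $\text{Mdl}_{\mathcal{L}}=\text{Fun}^\times(\mathcal{L},\text{Top})$ is the free cocompletion of $\mathcal{L}^{\text{op}}$ regarded as an $\infty$-category with finite coproducts (the fact recalled in the introduction, \cite{HTT} 5.3.6.10), restriction along $j$ identifies left adjoint functors $\text{Mdl}_{\mathcal{L}}\to\mathcal{C}$ with finite-coproduct-preserving functors $\mathcal{L}^{\text{op}}\to\mathcal{C}$. Second, the basepoint of $\text{Mdl}_{\mathcal{L}}$ is exactly $j(1)=\mathbb{I}$, so a left adjoint preserves basepoints precisely when the corresponding functor sends $1\mapsto\mathbb{I}_{\mathcal{C}}$; thus pointed left adjoints correspond to coproduct-preserving functors $f\colon\mathcal{L}^{\text{op}}\to\mathcal{C}$ with $f(1)\simeq\mathbb{I}_{\mathcal{C}}$. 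Third, because $\mathcal{L}^{\text{op}}$ is generated under finite coproducts by $1$ and $f$ preserves them, every such $f$ lands in the full subcategory $\mathcal{C}_{\text{fgf}}$ on the objects $\mathbb{I}_{\mathcal{C}}^{\amalg n}$; it therefore factors uniquely as a coproduct-preserving, basepoint-preserving functor $\mathcal{L}^{\text{op}}\to\mathcal{C}_{\text{fgf}}$, which after passing to opposites is exactly the datum of a map of Lawvere theories $\mathcal{L}\to\mathcal{C}^{\text{op}}_{\text{fgf}}$. Composing these three equivalences, all natural in $\mathcal{C}$, yields the displayed equivalence, and unwinding the identifications shows the composite is precomposition with $\eta_{\mathcal{L}}$. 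This establishes the adjunction $\text{Mdl}\dashv(-)^{\text{op}}_{\text{fgf}}$.

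Finally, the left adjoint is fully faithful if and only if its unit is an equivalence. But $\eta_{\mathcal{L}}$ is the opposite of the corestricted Yoneda embedding $\mathcal{L}^{\text{op}}\to(\text{Mdl}_{\mathcal{L}})_{\text{fgf}}$, which is an equivalence by exactly the content of Proposition \ref{PropYoneda}, namely $\mathcal{L}\cong(\text{Mdl}_{\mathcal{L}})^{\text{op}}_{\text{fgf}}$. Hence $\eta$ is a natural equivalence and $\text{Mdl}$ is fully faithful.

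I expect the main obstacle to lie not in any single step but in assembling them into a genuine $\infty$-categorical adjunction rather than a mere family of mapping-space bijections: one must check that the three equivalences are simultaneously natural in $\mathcal{C}$ (and compatibly so), that $\eta$ really is a natural transformation realizing the composite, and---most delicately---that restricting the free-cocompletion universal property to the pointed and to the finitely-generated-free settings is compatible, i.e.\ that the ``preserves basepoint'' and ``factors through $\mathcal{C}_{\text{fgf}}$'' conditions cut out matching full subcategories on both sides of the universal property. Verifying this last compatibility, together with confirming that finite coproducts in $\mathcal{C}_{\text{fgf}}$ agree with those computed in $\mathcal{C}$ (so that $\mathcal{C}^{\text{op}}_{\text{fgf}}$ genuinely carries the finite products making it a Lawvere theory), is where the real care is needed.
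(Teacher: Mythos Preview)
Your proof is correct and follows essentially the same strategy as the paper: both invoke \cite{HTT} 5.3.6.10 (the free-cocompletion universal property of $\text{Mdl}_\mathcal{L}$) to obtain the natural equivalence of mapping spaces establishing the adjunction, and both read off full faithfulness from Proposition~\ref{PropYoneda}. The only difference is packaging---the paper applies $R$ directly to produce the equivalence $\text{Fun}^L_\ast(\text{Mdl}_\mathcal{L},\mathcal{C})\simeq\text{Fun}^\times_\ast(\mathcal{L},\mathcal{C}^\text{op}_\text{fgf})$ and then deduces full faithfulness from $R_\ast L_\ast\cong\text{Id}$, while you unpack the same equivalence as a three-step chain and phrase full faithfulness via the unit criterion; your version is in fact more explicit about why the pointed and $\text{fgf}$ restrictions are compatible, which the paper leaves implicit.
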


\noindent In other words, Lwv is a \emph{colocalization} of $\text{Pr}^L_\ast$. Theorem \ref{ThmGGN} described explicitly \emph{which} colocalization by providing a testable criterion to determine the essential image of Lwv in $\text{Pr}^L_\ast$.

\begin{proof}
Call $L=\text{Mdl}$ and $R=(-)^\text{op}_\text{fgf}$, suggestive of left and right adjoints. The composition $RL$ is equivalent to the identity by Proposition \ref{PropYoneda}. Therefore, applying $R$ induces a map of spaces $$\text{Fun}^L_\ast(L(\mathcal{L}),\mathcal{C})\xrightarrow{R_\ast}\text{Fun}^\times_\ast(\mathcal{L},R(\mathcal{C})),$$ natural in both $\mathcal{L}\in\text{Lwv}$ and $\mathcal{C}\in\text{Pr}^L_\ast$, and this $R_\ast$ is a natural isomorphism by \cite{HTT} 5.3.6.10. Therefore, $L$ and $R$ are adjoint.

For $\mathcal{L},\mathcal{K}\in\text{Lwv}$, applying $L$ induces a map of spaces $$\text{Fun}^\times_\ast(\mathcal{L},\mathcal{K})\xrightarrow{L_\ast}\text{Fun}^L_\ast(L(\mathcal{L}),L(\mathcal{K})).$$ Since $RL\cong\text{Id}$, $R_\ast L_\ast$ is equivalent to the identity, and because $R_\ast$ is an equivalence (as above), so is $L_\ast$. Therefore, $L$ is fully faithful.
\end{proof}

\begin{remark}
As seen in the proof, Theorem \ref{Thm1} combines two facts in one: the adjunction asserts \cite{HTT} 5.3.6.10, that $\text{Mdl}_\mathcal{L}$ is the \emph{free cocompletion} of $\mathcal{L}^\text{op}$ (regarding the latter as an $\infty$-category which already has finite coproducts).

That the left adjoint is fully faithful asserts Proposition \ref{PropYoneda}, that $\mathcal{L}^\text{op}$ is a full subcategory of $\text{Mdl}_\mathcal{L}$.
\end{remark}

\begin{remark}
More generally, if $\text{CartMonCat}_\infty$ denotes the $\infty$-category of $\infty$-categories which admit finite products (and functors which preserve them), then the same proof shows $$\text{Mdl}:\text{CartMonCat}_\infty\rightarrow\text{Pr}^L$$ is fully faithful. It also comes very close to being a left adjoint to the functor $$(-)^\text{op}:\text{Pr}^L\rightarrow\widehat{\text{CartMonCat}}_\infty,$$ with the following fatal obstacle: The domain of Mdl consists of \emph{small} cartesian monoidal $\infty$-categories, while the codomain of $(-)^\text{op}$ consists of \emph{large} cartesian monoidal $\infty$-categories.

This set-theoretic problem arises because of the definition of presentable $\infty$-categories: they are required to have a small set of generating objects, but these objects are not remembered as part of the data. Theorem \ref{Thm1} solves the problem by introducing a \emph{framing}; that is, by remembering these objects (or in this case, a single object).
\end{remark}

\section{Algebra of Lawvere theories}
\noindent In Section \ref{SecSM1}, we show that the functor $\text{Mdl}_\mathcal{L}:\text{Lwv}\rightarrow\text{Pr}^L_\ast$ is symmetric monoidal. Then we study its behavior on commutative algebras in \ref{SecSM2}, constructing Day convolution products of models. Finally, we study its behavior on modules in Section \ref{SecSM3}, showing that many Lawvere theories have canonical enrichments. In fact, we provide evidence that $\mathcal{L}$-module Lawvere theories can be identified with $\text{Mdl}_\mathcal{L}$-enriched Lawvere theories (Conjecture \ref{ConjLwv}).

\subsection{Kronecker products of Lawvere theories}\label{SecSM1}
\noindent The primary technical contribution of this paper is first to cast the relationship between Lawvere theories and their models as a colocalization (Theorem \ref{Thm1}) and second that this colocalization is compatible with symmetric monoidal structures on Lwv (the Kronecker product) and $\text{Pr}^L_\ast$ (Lurie's tensor product). We review each of these:

\begin{remark}[Lurie's tensor product of presentable $\infty$-categories]
There is a closed symmetric monoidal tensor product on $\text{Pr}^L$ with the following universal property: left adjoint functors $\mathcal{C}\otimes\mathcal{D}\rightarrow\mathcal{E}$ can be identified with functors $\mathcal{C}\times\mathcal{D}\rightarrow\mathcal{E}$ which preserve small colimits in each variable separately. This is constructed by Lurie in \cite{HA} 4.8.1, and the unit is Top.

If $\mathcal{V}$ is presentable, to endow $\mathcal{V}$ with the structure of a commutative algebra in $\text{Pr}^{L,\otimes}$ is precisely to endow $\mathcal{V}$ with its own closed symmetric monoidal structure\footnote{a consequence of the adjoint functor theorem.}.
\end{remark}

\noindent If $\mathcal{C},\mathcal{D}$ are two \emph{pointed} presentable $\infty$-categories, $\mathcal{C}\otimes\mathcal{D}$ is also canonically pointed (for example, by the free functor $\text{Top}\cong\text{Top}\otimes\text{Top}\rightarrow\mathcal{C}\otimes\mathcal{D}$), so that $\text{Pr}^L_\ast$ is also symmetric monoidal via Lurie's tensor product.

A commutative algebra object in $\text{Pr}^L_\ast$ is a presentable $\infty$-category with a closed symmetric monoidal structure $\otimes$, pointed by the unit of $\otimes$.

\begin{remark}[Kronecker tensor product of Lawvere theories]
There is a closed symmetric monoidal tensor product of cartesian monoidal $\infty$-categories with the following universal property: functors $\mathcal{C}\otimes\mathcal{D}\rightarrow\mathcal{E}$ which preserve finite products can be identified with functors $\mathcal{C}\times\mathcal{D}\rightarrow\mathcal{E}$ which preserve finite products in each variable separately. This can be made precise in two equivalent ways:
\begin{itemize}
\item by Lurie's general framework of tensor products of categories (\cite{HA} 4.8.1);
\item as in \cite{Berman}: cartesian monoidal $\infty$-categories can be identified with modules over the commutative semiring category $\text{Fin}^\text{op}$, which admit a relative tensor product $\otimes_{\text{Fin}^\text{op}}$.
\end{itemize}
\noindent If $\mathcal{C},\mathcal{D}$ are Lawvere theories (that is, generated by a single object under $\times$), then $\mathcal{C}\otimes\mathcal{D}$ is also a Lawvere theory, so that Lwv inherits a symmetric monoidal operation $\otimes$ called \emph{Kronecker product}, and the unit is $\text{Fin}^\text{op}$. See \cite{Berman} for details.
\end{remark}

\noindent For classical Lawvere theories, the Kronecker product goes back to Freyd \cite{KroneckerProduct}, and it is also compatible with the Boardman-Vogt tensor product (\cite{HA} 2.2.5) of operads.

That is, if $\mathcal{L}_\mathcal{O}$ is the Lawvere theory associated to an operad $\mathcal{O}$, then $\mathcal{L}_{\mathcal{O}\otimes\mathcal{O}^\prime}\cong\mathcal{L}_\mathcal{O}\otimes\mathcal{L}_{\mathcal{O}^\prime}$, because the two sides have equivalent $\infty$-categories of models.

\begin{theorem}\label{Thm2}
The functors $$\text{Mdl}:\text{CartMonCat}_\infty\rightarrow\text{Pr}^L$$ $$\text{Mdl}:\text{Lwv}\rightarrow\text{Pr}^L_\ast$$ are compatible with the symmetric monoidal structures of the last two remarks.
\end{theorem}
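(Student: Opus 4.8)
The plan is to recognize both the Kronecker product and Lurie's tensor product as instances of a single construction --- Lurie's tensor products of $\infty$-categories relative to a class of colimit diagrams (\cite{HA} 4.8.1) --- and to identify $\text{Mdl}$ with a free cocompletion functor between two such. Writing $\mathbb{K}_\amalg$ for the class of finite coproduct diagrams and $\mathbb{K}_\text{all}$ for all small colimit diagrams, Lurie equips each $\infty$-category $\text{Cat}_\infty(\mathbb{K})$ of $\infty$-categories admitting $\mathbb{K}$-colimits with a symmetric monoidal structure whose tensor product classifies functors preserving $\mathbb{K}$-colimits separately in each variable, and he shows that for $\mathbb{K}\subseteq\mathbb{K}'$ the free $\mathbb{K}'$-cocompletion functor $\text{Cat}_\infty(\mathbb{K})\rightarrow\text{Cat}_\infty(\mathbb{K}')$ is symmetric monoidal. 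The theorem will then follow once $\text{Mdl}$ is exhibited as such a functor, up to an op-equivalence on the source.

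First I would set up the source identification. The functor $(-)^\text{op}$ carries $\text{CartMonCat}_\infty$ to the $\infty$-category $\text{Cat}_\infty(\mathbb{K}_\amalg)$ of $\infty$-categories with finite coproducts, turning products into coproducts and reversing the two universal properties; since the Kronecker product was defined precisely so that product-preserving functors out of $\mathcal{C}\otimes\mathcal{D}$ correspond to functors preserving products separately in each variable, this op-equivalence intertwines the Kronecker product with the $\mathbb{K}_\amalg$-tensor product (note the unit $\text{Fin}^\text{op}$ is sent to $\text{Fin}$, the free $\mathbb{K}_\amalg$-cocompletion of a point). Next, \cite{HTT} 5.3.6.10 identifies $\text{Mdl}_\mathcal{L}=\text{Fun}^\times(\mathcal{L},\text{Top})$ with the free $\mathbb{K}_\text{all}$-cocompletion of $\mathcal{L}^\text{op}$ regarded as an object of $\text{Cat}_\infty(\mathbb{K}_\amalg)$; this free cocompletion is presentable, so the functor lands in $\text{Pr}^L$, and Lurie's symmetric monoidal structure on the large $\widehat{\text{Cat}}_\infty(\mathbb{K}_\text{all})$ restricts to the stated tensor product on $\text{Pr}^L$. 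Composing the op-equivalence with the free cocompletion functor realizes $\text{Mdl}:\text{CartMonCat}_\infty\rightarrow\text{Pr}^L$ as symmetric monoidal.

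I expect the main obstacle to be the bookkeeping that makes these identifications genuinely symmetric monoidal rather than merely compatible on underlying $\infty$-categories, together with the attendant set-theoretic care. Two points need attention: that Lurie's abstract $\mathbb{K}_\amalg$-tensor product agrees with the Kronecker product as constructed in \cite{Berman} via the relative tensor $\otimes_{\text{Fin}^\text{op}}$ --- the remarks preceding the theorem assert this equivalence, so it suffices to match their universal properties --- and that passing to the presentable, large-cardinal setting of $\text{Pr}^L$ does not disturb Lurie's monoidal comparison, which is controlled by the accessibility of the free cocompletion. Granting the general statement of \cite{HA} 4.8.1, neither point requires new ideas, only a careful matching of the representing functors.

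Finally I would deduce the pointed statement. A symmetric monoidal functor preserves units, and here $\text{Mdl}$ sends the unit $\text{Fin}^\text{op}$ of the Kronecker product to $\text{Mdl}_{\text{Fin}^\text{op}}\cong\text{Top}$, the unit of Lurie's tensor product. Any symmetric monoidal functor carrying unit to unit induces a symmetric monoidal functor on the slice $\infty$-categories under the respective units, with tensor $(\mathbb{1}\rightarrow X)\otimes(\mathbb{1}\rightarrow Y)$ given by $\mathbb{1}\cong\mathbb{1}\otimes\mathbb{1}\rightarrow X\otimes Y$; since $\text{Lwv}$ sits inside $(\text{CartMonCat}_\infty)_{\text{Fin}^\text{op}/}$ as the full subcategory of Lawvere theories and $\text{Pr}^L_\ast\cong\text{Pr}^L_{\text{Top}/}$, this produces the symmetric monoidal functor $\text{Mdl}:\text{Lwv}\rightarrow\text{Pr}^L_\ast$. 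It remains only to check that the induced basepoint of $\text{Mdl}_\mathcal{L}$ agrees with the distinguished object $\mathbb{I}=\text{Map}(1,-)$: the pointing coming from the canonical unit map $\text{Fin}^\text{op}\rightarrow\mathcal{L}$ is the image of the generator under $\mathcal{L}^\text{op}\hookrightarrow\text{Mdl}_\mathcal{L}$, which is exactly the free model on one generator, as required.
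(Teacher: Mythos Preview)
Your proposal is correct and follows essentially the same approach as the paper. The paper is terser: it cites \cite{HA} 4.8.1.8 directly (which packages the free-cocompletion-is-symmetric-monoidal statement you unpack), composes with the op-equivalence $\text{CartMonCat}_\infty\simeq\text{CocartMonCat}_\infty$, and then passes to $\mathbb{E}_0$-algebras on each side---which is exactly your slice-under-the-unit step phrased differently.
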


\noindent That is, Lwv is a \emph{symmetric monoidal colocalization} of $\text{Pr}^L_\ast$.

In the next two sections, we will explore the consequences of this theorem when applied to (first) commutative algebras and (second) modules with respect to the two symmetric monoidal structures.

\begin{proof}
By \cite{HA} 4.8.1.8, the functor $$\text{Fun}^\times((-)^\text{op},\text{Top}):\text{CocartMonCat}_\infty\rightarrow\text{Pr}^L$$ is symmetric monoidal. Passing via the symmetric monoidal equivalence $(-)^\text{op}:\text{CartMonCat}_\infty\rightarrow\text{CocartMonCat}_\infty$, we have the first part.

Taking $\mathbb{E}_0$-algebras (that is, pointed objects) on each side, we see that $\text{Mdl}:\text{CartMonCat}_\ast\rightarrow\text{Pr}^L_\ast$ is symmetric monoidal, and this restricts to the symmetric monoidal full subcategory $\text{Lwv}\subseteq\text{CartMonCat}_\ast$.
\end{proof}

\subsection{Algebra Lawvere theories and Day convolution}\label{SecSM2}
\noindent For suitable Lawvere theories $\mathcal{L}$, we can use Theorem \ref{Thm2} to construct tensor products of $\mathcal{L}$-models. A commutative algebra structure on $\mathcal{L}\in\text{Lwv}^\otimes$ amounts to a symmetric monoidal structure on $\mathcal{L}$ which preserves finite products independently in each variable, and such that the unit is the distinguished object $1$.

Such Lawvere theories are sometimes called \emph{commutative algebraic theories} in the classical literature \cite{Keigher}.

\begin{corollary}\label{Cor1}
If $\mathcal{L}\in\text{CAlg}(\text{Lwv}^\otimes)$, then $\text{Mdl}_\mathcal{L}$ inherits a closed symmetric monoidal structure called \emph{Day convolution}, with unit $\mathbb{I}$\footnote{This is the Day convolution of Lurie \cite{HA} and Glasman \cite{DayCon}.}.

Conversely, if $\text{Mdl}_\mathcal{L}$ has a closed symmetric monoidal structure with unit $\mathbb{I}$, then $\mathcal{L}$ inherits a commutative algebra structure in Lwv.
\end{corollary}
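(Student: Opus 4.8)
The plan is to deduce both implications formally from Theorem \ref{Thm2}, which makes $\text{Mdl}:\text{Lwv}^\otimes\to\text{Pr}^L_\ast$ symmetric monoidal, together with Theorem \ref{Thm1}, which exhibits it as a fully faithful colocalization. The mechanism in both directions is to pass to commutative algebra objects: a symmetric monoidal functor carries commutative algebras to commutative algebras, while its right adjoint is automatically lax symmetric monoidal and so carries them back.

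For the forward direction, I would first recall from the preceding remarks that an object of $\text{CAlg}(\text{Lwv}^\otimes)$ is exactly a Lawvere theory with a compatible symmetric monoidal structure whose unit is $1$, whereas an object of $\text{CAlg}(\text{Pr}^L_\ast)$ is exactly a presentable $\infty$-category with a closed symmetric monoidal structure pointed by its unit. Applying $\text{CAlg}(-)$ to the symmetric monoidal functor $\text{Mdl}$ then gives $\text{CAlg}(\text{Lwv}^\otimes)\to\text{CAlg}(\text{Pr}^L_\ast)$, and evaluating at $\mathcal{L}$ equips $\text{Mdl}_\mathcal{L}$ with a closed symmetric monoidal structure whose unit is the distinguished object $\mathbb{I}$. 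Closedness is automatic here, since every commutative algebra in $\text{Pr}^L_\ast$ is closed by the adjoint functor theorem.

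The one step that is not purely formal is the identification of this structure with Day convolution, which I would settle using the universal property of Lurie's tensor product. Because $\text{Mdl}$ is monoidal, it renders the embedding $\mathcal{L}^\text{op}\hookrightarrow\text{Mdl}_\mathcal{L}$ symmetric monoidal, so the induced product restricts to $\otimes_\mathcal{L}$ on representable models; and by the defining property of that tensor product, the multiplication $\text{Mdl}_\mathcal{L}\otimes\text{Mdl}_\mathcal{L}\to\text{Mdl}_\mathcal{L}$ preserves colimits separately in each variable. A colimit-preserving tensor product on $\text{Mdl}_\mathcal{L}$ that restricts to the given product on representables is unique, and this characterization is precisely the defining universal property of Day convolution (\cite{HA} 4.8.1, \cite{DayCon}); hence the two agree. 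I expect this matching of universal properties to be the main point requiring care.

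For the converse, I would use that $R=(-)^\text{op}_\text{fgf}$, being the right adjoint of the symmetric monoidal functor $\text{Mdl}$, inherits a canonical lax symmetric monoidal structure and therefore induces $\text{CAlg}(R):\text{CAlg}(\text{Pr}^L_\ast)\to\text{CAlg}(\text{Lwv}^\otimes)$, compatibly with the forgetful functors to underlying objects. A closed symmetric monoidal structure on $\text{Mdl}_\mathcal{L}$ with unit $\mathbb{I}$ is nothing but a lift of $\text{Mdl}_\mathcal{L}=\text{Mdl}(\mathcal{L})$ to $\text{CAlg}(\text{Pr}^L_\ast)$; applying $\text{CAlg}(R)$ and invoking the equivalence $R\,\text{Mdl}\cong\text{Id}$ from Proposition \ref{PropYoneda} yields a commutative algebra in $\text{Lwv}$ whose underlying object is $\mathcal{L}$, so that $\mathcal{L}$ inherits the desired structure. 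Full faithfulness of $\text{Mdl}$ (Theorem \ref{Thm1}) in fact promotes this to a bijective correspondence between the two kinds of structure, though only existence is required here.
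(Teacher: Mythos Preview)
Your proposal is correct and follows essentially the same route as the paper: both directions are obtained by applying $\text{CAlg}(-)$ to the symmetric monoidal functor $\text{Mdl}$ and to its (automatically lax symmetric monoidal) right adjoint $(-)^\text{op}_\text{fgf}$, using $R\circ\text{Mdl}\cong\text{Id}$ for the converse. The only difference is that you take the trouble to justify the name ``Day convolution'' via the universal property, whereas the paper simply asserts this identification in a footnote without argument.
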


\begin{proof}
Since $\text{Mdl}$ is symmetric monoidal, it takes commutative algebras to commutative algebras. Therefore, if $\mathcal{L}\in\text{CAlg}(\text{Lwv}^\otimes)$, then $\text{Mdl}_\mathcal{L}$ has a commutative algebra structure in $\text{Pr}^L_\ast$, which is to say a closed symmetric monoidal structure with unit $\mathbb{I}$.

Conversely, the right adjoint to a symmetric monoidal functor is lax symmetric monoidal \cite{GepnerHaugseng}. If $\text{Mdl}_\mathcal{L}$ has a closed symmetric monoidal structure with unit $\mathbb{I}$, it is a commutative algebra in $\text{Pr}^L_\ast$, so $\mathcal{L}\in\text{CAlg}(\text{Lwv}^\otimes)$.
\end{proof}

\begin{example}
The effective Burnside 2-category is symmetric monoidal under cartesian product, which makes it a commutative algebraic theory. Therefore, Day convolution provides a closed symmetric monoidal \emph{smash product} for $\mathbb{E}_\infty$-spaces.
\end{example}

\begin{remark}[Models in other $\infty$-categories]
More generally, suppose $\mathcal{L}$ is a Lawvere theory, and $\mathcal{V}$ is a presentable $\infty$-category. By general theory of presentable $\infty$-categories, $$\text{Fun}^\times(\mathcal{L},\mathcal{V})\cong\text{Fun}^\amalg(\mathcal{L}^\text{op},\mathcal{V}^\text{op})\cong\text{Fun}^L(\text{Mdl}_\mathcal{L},\mathcal{V}^\text{op})\cong\text{Fun}^R(\mathcal{V}^\text{op},\text{Mdl}_\mathcal{L}).$$ Lurie proves (\cite{HA} 4.8.1.17) for two presentable $\infty$-categories $\mathcal{C}$ and $\mathcal{D}$, that $\mathcal{C}\otimes\mathcal{D}\cong\text{Fun}^R(\mathcal{C}^\text{op},\mathcal{D})$. Therefore, models of $\mathcal{L}$ in $\mathcal{V}$ can be identified with the tensor product $$\text{Mdl}_\mathcal{L}(\mathcal{V})\cong\text{Mdl}_\mathcal{L}\otimes\mathcal{V}.$$ This equivalence is due to \cite{GGN} Proposition B.3.

In particular, if $\mathcal{L}\in\text{CAlg}(\text{Lwv}^\otimes)$ and $\mathcal{V}$ has a closed symmetric monoidal structure, then $\text{Mdl}_\mathcal{L}(\mathcal{V})$ also has a closed symmetric monoidal structure\footnote{because tensor products of commutative algebras are commutative algebras} (Day convolution).
\end{remark}

\subsection{Module Lawvere theories and enrichment}\label{SecSM3}
\noindent Let $\mathcal{V}$ be presentable and closed symmetric monoidal; i.e., $\mathcal{V}\in\text{CAlg}(\text{Pr}^{L,\otimes})$. If $\mathcal{M}\in\text{Pr}^L$ is a $\mathcal{V}$-module, and $X\in\mathcal{M}$, then $-\otimes X:\mathcal{V}\rightarrow\mathcal{M}$ has a right adjoint $\text{Map}(X,-):\mathcal{M}\rightarrow\mathcal{V}$ which makes $\mathcal{M}$ naturally $\mathcal{V}$-enriched. Gepner and Haugseng have made this precise (\cite{GepnerHaugseng} 7.4.13).

\begin{conjecture}
Conversely, we may think of $\mathcal{V}$-modules in $\text{Pr}^L$ as precisely those $\mathcal{V}$-enriched categories which are \emph{presentable in an enriched sense}. As far as the author is aware, the notion of `presentable in an enriched sense' has not yet been made rigorous for $\infty$-categories, but this is a philosophy already familiar to some.
\end{conjecture}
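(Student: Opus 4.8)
The forward direction---that every $\mathcal{V}$-module in $\text{Pr}^L$ is canonically $\mathcal{V}$-enriched---is exactly the cited result of Gepner--Haugseng (\cite{GepnerHaugseng} 7.4.13), so the real content is to (i) make precise the phrase \emph{presentable in an enriched sense}, and (ii) show that the enrichment functor is an equivalence onto these enriched-presentable categories. The plan is as follows. For (i), I would call a $\mathcal{V}$-enriched $\infty$-category $\mathcal{C}$ \emph{presentable} if it is tensored and cotensored over $\mathcal{V}$, admits all $\mathcal{V}$-weighted colimits, and is \emph{accessible}, meaning that for some regular cardinal $\kappa$ there is a small full $\mathcal{V}$-subcategory of $\kappa$-compact objects generating $\mathcal{C}$ under $\kappa$-filtered colimits; morphisms would be the $\mathcal{V}$-enriched functors preserving $\mathcal{V}$-weighted colimits. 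This is the notion that has not yet been pinned down in the literature, and everything downstream depends on it.

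For (ii) I would build the two functors and show they are mutually inverse. Passing from a module to an enriched category is Gepner--Haugseng, and one checks the presentability clauses directly: a $\mathcal{V}$-module $\mathcal{M}\in\text{Pr}^L$ is tensored by its action, cotensored because the adjoint functor theorem supplies the right adjoints, cocomplete because it is presentable, and accessible because its underlying $\infty$-category is. Conversely, given a presentable $\mathcal{V}$-enriched $\mathcal{C}$, the tensoring $\mathcal{V}\times\mathcal{C}\to\mathcal{C}$ preserves colimits in each variable separately (it is a left adjoint in each variable, the cotensor being its right adjoint), so it descends along Lurie's tensor product to a map $\mathcal{V}\otimes\mathcal{C}\to\mathcal{C}$ in $\text{Pr}^L$; the associativity and unit coherences needed to promote this to a module structure are precisely the coherences of the enriched tensoring. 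That the two passages are inverse reduces to one observation: the enriched mapping objects $\underline{\text{Map}}_\mathcal{C}(c,d)$ are \emph{determined} by the tensoring through the defining adjunction $\text{Map}_\mathcal{C}(v\otimes c,d)\cong\text{Map}_\mathcal{V}(v,\underline{\text{Map}}_\mathcal{C}(c,d))$, so the enrichment recovered from the reconstructed module agrees with the original one.

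To upgrade this to an equivalence of $\infty$-categories rather than a bijection on objects, I would compare morphisms: a $\mathcal{V}$-linear left adjoint $\mathcal{M}\to\mathcal{N}$ of modules restricts to a colimit-preserving $\mathcal{V}$-enriched functor, and conversely a colimit-preserving enriched functor automatically commutes with tensors---again by the tensor/cotensor adjunction---hence is $\mathcal{V}$-linear. Making this identification natural in both variables yields the desired equivalence $\text{Mod}_\mathcal{V}(\text{Pr}^L)\cong\mathcal{V}\text{-}\text{Pr}^L_{\text{enr}}$.

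The hard part is not the formal shuffling above but the foundations it silently assumes: a usable theory of $\mathcal{V}$-weighted colimits, an \emph{enriched adjoint functor theorem} guaranteeing the cotensors and the right adjoints that every step invokes, and a Gabriel--Ulmer-style characterization ensuring that the accessibility clause is preserved by both constructions. None of these is currently available for $\infty$-categories at the level of generality required, which is exactly why the statement remains a conjecture rather than a theorem. I expect that once enriched presentability is developed rigorously, the comparison itself will fall out of the tensor/cotensor adjunction described here.
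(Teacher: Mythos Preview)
The paper does not prove this statement: it is stated as a \emph{conjecture}, with the explicit disclaimer that ``the notion of `presentable in an enriched sense' has not yet been made rigorous for $\infty$-categories.'' There is therefore no proof in the paper to compare your proposal against.

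What you have written is not a proof but a research program, and you acknowledge as much in your final paragraph. The outline is sensible as a strategy---defining enriched presentability via tensoring, cotensoring, weighted cocompleteness, and accessibility, then using the tensor/cotensor adjunction to pass between module structures and enrichments---and you correctly identify the genuine obstructions: the missing theory of $\mathcal{V}$-weighted colimits, an enriched adjoint functor theorem, and an enriched Gabriel--Ulmer theory at the $\infty$-categorical level. Since you already recognize that these gaps are precisely why the statement remains a conjecture, your proposal is an honest assessment rather than a claimed proof. Just be clear that nothing here constitutes progress on the conjecture beyond articulating what would need to be done.
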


\noindent We have a second corollary of Theorem \ref{Thm2}:

\begin{corollary}\label{CorEnr}
If $\mathcal{L}$ is a commutative semiring $\infty$-category whose additive structure is cartesian monoidal, and $\mathcal{M}$ is an $\mathcal{L}$-module, then $\mathcal{M}$ is naturally enriched in $\text{Mdl}_\mathcal{L}$.
\end{corollary}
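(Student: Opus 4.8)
The plan is to transport the module structure across the symmetric monoidal functor $\text{Mdl}$, invoke the module-theoretic enrichment of Gepner--Haugseng on the resulting category of models, and then restrict that enrichment back down to $\mathcal{M}$.

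First I would unwind the hypothesis. To say that $\mathcal{L}$ is a commutative semiring $\infty$-category whose additive structure is cartesian monoidal is exactly to say, as in the discussion preceding Corollary \ref{Cor1}, that $\mathcal{L}$ carries a symmetric monoidal product distributing over finite products in each variable with unit the generator $1$; that is, $\mathcal{L}\in\text{CAlg}(\text{Lwv}^\otimes)$. Corollary \ref{Cor1} then supplies, with no further work, that $\text{Mdl}_\mathcal{L}$ is a closed symmetric monoidal presentable $\infty$-category under Day convolution, i.e.\ $\text{Mdl}_\mathcal{L}\in\text{CAlg}(\text{Pr}^{L,\otimes})$ --- precisely the input required by the enrichment machinery.

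Next I would push the module structure forward. By Theorem \ref{Thm2} the functor $\text{Mdl}:\text{Lwv}^\otimes\rightarrow\text{Pr}^{L,\otimes}_\ast$ is symmetric monoidal, and a symmetric monoidal functor carries modules to modules; applying it to the action $\mathcal{L}\otimes\mathcal{M}\rightarrow\mathcal{M}$ yields an action $\text{Mdl}_\mathcal{L}\otimes\text{Mdl}_\mathcal{M}\rightarrow\text{Mdl}_\mathcal{M}$, exhibiting $\text{Mdl}_\mathcal{M}$ as a module over $\text{Mdl}_\mathcal{L}$ in $\text{Pr}^L$. Now the result of Gepner and Haugseng cited just above the corollary (\cite{GepnerHaugseng} 7.4.13) applies with $\mathcal{V}=\text{Mdl}_\mathcal{L}$: for each pair of objects the adjunction $-\otimes X\dashv\underline{\text{Hom}}(X,-)$ furnishes mapping objects in $\text{Mdl}_\mathcal{L}$, assembling into a canonical $\text{Mdl}_\mathcal{L}$-enrichment of $\text{Mdl}_\mathcal{M}$.

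Finally I would descend to $\mathcal{M}$. By Proposition \ref{PropYoneda}, $\mathcal{M}^\text{op}$ is the full subcategory $(\text{Mdl}_\mathcal{M})_\text{fgf}\subseteq\text{Mdl}_\mathcal{M}$ of finitely generated free objects, so restricting the enrichment along this fully faithful inclusion enriches $\mathcal{M}^\text{op}$; reversing mapping objects via the symmetry of the Day convolution product then enriches $\mathcal{M}$ itself, and the canonicity of every step delivers the asserted naturality. The hard part will be exactly this last descent: one must verify, in the chosen model of enriched $\infty$-categories, that the Gepner--Haugseng enrichment of a presentable module restricts cleanly to a full subcategory and is compatible with passage to the opposite category. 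Everything upstream --- the translation of the hypothesis, the preservation of modules by a symmetric monoidal functor, and the construction of the internal hom --- is either immediate from the earlier results or purely formal.
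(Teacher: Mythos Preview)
Your proposal is correct and follows essentially the same route as the paper's own proof: push the $\mathcal{L}$-module structure on $\mathcal{M}$ through the symmetric monoidal functor $\text{Mdl}$ to make $\text{Mdl}_\mathcal{M}$ an $\text{Mdl}_\mathcal{L}$-module in $\text{Pr}^L$, invoke the Gepner--Haugseng enrichment, and then restrict along the Yoneda inclusion $\mathcal{M}\subseteq\text{Mdl}_\mathcal{M}^\text{op}$. Your write-up is more explicit than the paper's (which compresses all of this into three lines), and you are right to flag the restriction-plus-opposite step as the one place requiring care; the paper glosses over it just as you do.
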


\begin{proof}
If $\mathcal{M}$ is an $\mathcal{L}$-module in Lwv, then $\text{Mdl}_\mathcal{M}$ is a $\text{Mdl}_\mathcal{L}$-model in $\text{Pr}^L$. As above, $\text{Mdl}_\mathcal{M}$ inherits a canonical $\text{Mdl}_\mathcal{L}$-enrichment, which restricts to a $\text{Mdl}_\mathcal{L}$-enrichment on the full subcategory $\mathcal{M}\subseteq\text{Mdl}_\mathcal{M}^\text{op}$.
\end{proof}

\begin{example}\label{ExSemi}
If $\mathcal{L}=\text{Burn}^\text{eff}$ is the Lawvere theory for $\mathbb{E}_\infty$-spaces, then $\text{Burn}^\text{eff}$-modules can be identified with semiadditive $\infty$-categories (\cite{Berman} Theorem 1.2). By corollary \ref{CorEnr}, any semiadditive $\infty$-category is naturally enriched in $\mathbb{E}_\infty$-spaces.

This is the homotopical analogue of a classical fact: semiadditive categories are naturally enriched in commutative monoids.
\end{example}

\noindent Conversely, if $\mathcal{L}\in\text{CAlg}(\text{Lwv}^\otimes)$, then any Lawvere theory enriched in $\text{Mdl}_\mathcal{L}$ is naturally tensored over $\mathcal{L}$:

\begin{definition}
If $\mathcal{V}$ is presentable and closed symmetric monoidal, a \emph{$\mathcal{V}$-enriched Lawvere theory} is a $\mathcal{V}$-enriched category for which the underlying $\infty$-category is a Lawvere theory.
\end{definition}

\begin{theorem}\label{ThmEnr}
Suppose $\mathcal{L}\in\text{CAlg}(\text{Lwv}^\otimes)$, and $\mathcal{K}^\text{enr}$ is a $\text{Mdl}_\mathcal{L}$-enriched Lawvere theory with underlying $\infty$-category $\mathcal{K}$. For any $X\in\mathcal{L}$, $K\in\mathcal{K}$, there is some object $X\otimes K\in\mathcal{K}$ with a natural isomorphism $$\text{Map}^\text{enr}(-,K)(X)\cong\text{Map}(-,X\otimes K),$$ and this $\otimes$ arises from a morphism of Lawvere theories $$\mathcal{L}\otimes\mathcal{K}\rightarrow\mathcal{K}.$$
\end{theorem}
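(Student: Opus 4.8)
The plan is to produce $X\otimes K$ as an enriched cotensor and then check that these cotensors assemble into a bifunctor preserving finite products in each variable. Write $\mathcal{V}=\text{Mdl}_\mathcal{L}$ with its Day convolution, unit $\mathbb{I}$, and underlying-space functor $\text{ev}_1=\text{Map}_\mathcal{V}(\mathbb{I},-):\mathcal{V}\rightarrow\text{Top}$, so that the underlying mapping spaces of $\mathcal{K}$ are $\text{Map}_\mathcal{K}(K',K)=\text{Map}^\text{enr}(K',K)(1)$. Under the Yoneda embedding $\mathcal{L}^\text{op}\hookrightarrow\mathcal{V}$ of Proposition \ref{PropYoneda}, the object $X=1^{\times n}$ goes to the finitely generated free model $\mathbb{I}^{\amalg n}$, and evaluation at $X$ is corepresented by it, so $\text{Map}^\text{enr}(K',K)(X)\cong\text{Map}_\mathcal{V}(\mathbb{I}^{\amalg n},\text{Map}^\text{enr}(K',K))$. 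Thus the object we seek is exactly the cotensor of $K$ by $\mathbb{I}^{\amalg n}$.

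First I would establish representability. Since every object of $\mathcal{L}$ has the form $1^{\times n}$ and $\text{Map}^\text{enr}(K',K):\mathcal{L}\rightarrow\text{Top}$ is a model, hence product-preserving, there are equivalences $$\text{Map}^\text{enr}(K',K)(1^{\times n})\cong\text{Map}^\text{enr}(K',K)(1)^{\times n}\cong\text{Map}_\mathcal{K}(K',K)^{\times n}\cong\text{Map}_\mathcal{K}(K',K^{\times n}),$$ natural in $K'$ because each step is natural in the functor $\text{Map}^\text{enr}(-,K):\mathcal{K}^\text{op}\rightarrow\mathcal{V}$ and because $\mathcal{K}$ has finite products. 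So the cotensor exists and we set $X\otimes K:=K^{\times n}$; concretely, cotensoring by the free object $\mathbb{I}^{\amalg n}$ reduces to the $n$-fold power, which is available in any Lawvere theory even though $\mathcal{K}$ need not be cocomplete.

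Next I would assemble the bifunctor. Uncurrying the enriched hom gives a functor $\mathcal{L}\times\mathcal{K}\rightarrow\mathcal{P}(\mathcal{K})$, $(X,K)\mapsto\text{Map}^\text{enr}(-,K)(X)$, which by the previous step lands in the representable presheaves; since the Yoneda embedding $\mathcal{K}\hookrightarrow\mathcal{P}(\mathcal{K})$ is fully faithful onto these, it factors through a bifunctor $\otimes:\mathcal{L}\times\mathcal{K}\rightarrow\mathcal{K}$. Functoriality in $X$ — that is, the action of the operations of $\mathcal{L}$ — is now automatic. It remains to verify that $\otimes$ preserves finite products separately in each variable, after which the universal property of the Kronecker product supplies the desired map of Lawvere theories $\mathcal{L}\otimes\mathcal{K}\rightarrow\mathcal{K}$, which sends the distinguished object to $1\otimes 1_\mathcal{K}\cong 1_\mathcal{K}$ since $1\otimes K\cong K$.

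Preservation of products in the $\mathcal{L}$-variable is immediate, as $1^{\times(n+m)}\otimes K\cong K^{\times(n+m)}\cong(1^{\times n}\otimes K)\times(1^{\times m}\otimes K)$, and more formally because products in $\mathcal{P}(\mathcal{K})$ are computed pointwise, models preserve products, and the Yoneda embedding both preserves and detects finite products. The main obstacle is preservation in the $\mathcal{K}$-variable, which amounts to showing that finite products in $\mathcal{K}$ are \emph{enriched} products, i.e. that $\text{Map}^\text{enr}(K',K_1\times K_2)\cong\text{Map}^\text{enr}(K',K_1)\times\text{Map}^\text{enr}(K',K_2)$ in $\mathcal{V}$ — a statement that is not automatic for an arbitrary enriched category. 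The projections induce the comparison map, and to see it is an equivalence I would invoke Theorem \ref{ThmGGN}: the forgetful functor $\text{ev}_1:\mathcal{V}\rightarrow\text{Top}$ is conservative, and being a right adjoint it preserves products, so it suffices that $\text{ev}_1$ of the comparison map is an equivalence — but that is precisely the statement that $K_1\times K_2$ is a product in the underlying $\infty$-category $\mathcal{K}$. Granting this, evaluating the enriched-product equivalence at $X$ gives $X\otimes(K_1\times K_2)\cong(X\otimes K_1)\times(X\otimes K_2)$, which completes the hypotheses for the Kronecker universal property.
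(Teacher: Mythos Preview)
Your proposal is correct and follows essentially the same approach as the paper: both arguments uncurry the enriched hom to a functor $\mathcal{L}\times\mathcal{K}\rightarrow\mathcal{P}(\mathcal{K})$, use conservativity of $\text{ev}_1$ (via Theorem~\ref{ThmGGN}) to verify that finite products in $\mathcal{K}$ are enriched products, and then factor through the Yoneda image $\mathcal{K}\subseteq\mathcal{P}(\mathcal{K})$. The only difference is cosmetic ordering---the paper establishes product-preservation in the second variable first and factors through $\mathcal{K}$ last, whereas you identify $X\otimes K\cong K^{\times n}$ explicitly at the outset and defer the conservativity argument to the end.
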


\begin{proof}
Because $\mathcal{K}^\text{enr}$ is $\text{Mdl}_\mathcal{L}$-enriched, there is a functor $$\text{Map}^\text{enr}_\mathcal{K}(-,-):\mathcal{K}^\text{op}\times\mathcal{K}\rightarrow\text{Mdl}_\mathcal{L},$$ and the composite with the forgetful functor $\text{ev}_1:\text{Mdl}_\mathcal{L}\rightarrow\text{Top}$ is the ordinary mapping space $\text{Map}_\mathcal{K}$. In particular, that composite preserves finite products in the second variable.

By Theorem \ref{ThmGGN}, the forgetful functor $\text{ev}_1$ (evaluation at 1) is conservative. It also preserves finite products because it is a right adjoint. Therefore, the functor $\text{Map}^\text{enr}(-,-)$ preserves finite products in the second variable.

It follows that the adjoint $\mathcal{L}\times\mathcal{K}\rightarrow\text{Fun}(\mathcal{K}^\text{op},\text{Top})$ preserves finite products independently in each variable, thereby inducing a functor $$\phi:\mathcal{L}\otimes\mathcal{K}\rightarrow\text{Fun}(\mathcal{K}^\text{op},\text{Top})$$ which preserves finite products. By construction, $\phi(X\otimes K)\cong\text{Map}^\text{enr}(-,K)(X)$. In particular, if $n$ denotes $1^{\times n}$ in a Lawvere theory, $$\phi(n)\cong\text{Map}^\text{enr}_\mathcal{K}(-,n)(1)\cong\text{ev}_1\circ\text{Map}^\text{enr}_\mathcal{K}(-,n)\cong\text{Map}_\mathcal{K}(-,n),$$ so that $\phi$ factors through the full subcategory $\mathcal{K}\subseteq\text{Fun}(\mathcal{K}^\text{op},\text{Top})$. This completes the proof.
\end{proof}

\noindent Together, Corollary \ref{CorEnr} and Theorem \ref{ThmEnr} are suggestive of:

\begin{conjecture}\label{ConjLwv}
If $\mathcal{L}\in\text{CAlg}(\text{Lwv}^\otimes)$, the $\infty$-categories of $\text{Mdl}_\mathcal{L}$-enriched Lawvere theories and $\mathcal{L}$-module Lawvere theories are equivalent.
\end{conjecture}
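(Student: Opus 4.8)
The plan is to exhibit the two constructions already in hand as mutually inverse equivalences of $\infty$-categories. On objects, Corollary \ref{CorEnr} gives a map $\Phi$ from $\mathcal{L}$-module Lawvere theories to $\text{Mdl}_\mathcal{L}$-enriched Lawvere theories (form $\text{Mdl}_\mathcal{K}$, take the Gepner--Haugseng enrichment of the $\text{Mdl}_\mathcal{L}$-module, and restrict to the full subcategory of finitely generated free objects $\mathcal{K}\subseteq\text{Mdl}_\mathcal{K}^\text{op}$), while Theorem \ref{ThmEnr} gives a map $\Psi$ in the reverse direction (from $\mathcal{K}^\text{enr}$ extract the tensoring $\mathcal{L}\otimes\mathcal{K}\to\mathcal{K}$). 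First I would check that both composites are the identity on objects. This is forced by the defining property of the Gepner--Haugseng enrichment: its enriched Hom $\text{Map}^\text{enr}(-,K)$ is by construction right adjoint to tensoring, while the object $X\otimes K$ produced by Theorem \ref{ThmEnr}, characterized by $\text{Map}^\text{enr}(-,K)(X)\cong\text{Map}(-,X\otimes K)$, is exactly that tensoring. By uniqueness of adjoints, reconstructing the enrichment from the recovered action returns the original, and conversely. The entire difficulty is thus to upgrade this object-level bijection to a coherent equivalence of $\infty$-categories.

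For the upgrade I would pass through the tensor product of $\infty$-categories. By Theorem \ref{Thm2} the functor $\text{Mdl}$ is a fully faithful symmetric monoidal left adjoint, so using its lax monoidal right adjoint $(-)^\text{op}_\text{fgf}$ and the standard fact that a fully faithful symmetric monoidal left adjoint induces fully faithful functors on module $\infty$-categories, one obtains a fully faithful colocalization
$$\text{Mod}_\mathcal{L}(\text{Lwv}^\otimes)\hookrightarrow\text{Mod}_{\text{Mdl}_\mathcal{L}}(\text{Pr}^L_\ast),$$
whose essential image is characterized, exactly as in Theorem \ref{ThmGGN}, as those $\text{Mdl}_\mathcal{L}$-modules whose underlying pointed presentable $\infty$-category has conservative forgetful functor to Top preserving geometric realizations. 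In parallel, a $\text{Mdl}_\mathcal{L}$-enriched Lawvere theory should embed, by enriched free cocompletion, into the (still to be defined) $\infty$-category of $\text{Mdl}_\mathcal{L}$-enriched presentable $\infty$-categories, again as the full subcategory cut out by the same finite-generation condition. The conjecture would then follow by identifying the two ambient $\infty$-categories and matching their finitely-generated-free colocalizations.

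The main obstacle is precisely this identification: one needs a theory of $\mathcal{V}$-enriched presentable $\infty$-categories together with an equivalence to $\text{Mod}_\mathcal{V}(\text{Pr}^L)$ -- the notion of \emph{presentable in an enriched sense} flagged above as not yet rigorous -- and, within it, a promotion of Gepner--Haugseng's construction (\cite{GepnerHaugseng} 7.4.13), which a priori enriches only a single module, to a functor of $\infty$-categories. This is the genuinely open input, and the reason the statement is only a conjecture. Failing such a general theory, I would fall back on a direct comparison exploiting the single-generator structure of Lawvere theories: since $\mathcal{K}^\text{op}$ is generated under finite coproducts by one object, both the enrichment and the tensoring are determined by the objects $\text{Map}^\text{enr}(1^{\times m},1^{\times n})$ together with their composition, so one could attempt to assemble the equivalence by hand from this finite data, reducing the coherence problem to bookkeeping with the Kronecker product $\mathcal{L}\otimes\mathcal{K}$ and the representability of Theorem \ref{ThmEnr}. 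I expect the hardest part to be organizing the higher coherences of the enriched composition into a map of $\infty$-categories without invoking the missing enriched-presentable foundations.
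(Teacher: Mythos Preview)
The statement you are attempting to prove is labelled a \emph{conjecture} in the paper, and the paper does not supply a proof of it. There is therefore nothing to compare your proposal against: the author presents Corollary~\ref{CorEnr} and Theorem~\ref{ThmEnr} as evidence for the conjecture, explicitly flags (in the paragraph preceding Corollary~\ref{CorEnr}) that the needed notion of ``presentable in an enriched sense'' has not been made rigorous for $\infty$-categories, and leaves the general statement open. The only case the paper does establish is $\mathcal{L}=\text{Burn}$, handled by the ad hoc argument of Theorem~\ref{ThmBurn}.

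Your proposal is not a proof but an honest strategy sketch, and you correctly identify the genuine obstruction: promoting the Gepner--Haugseng enrichment of a single module to a functorial equivalence $\text{Mod}_{\text{Mdl}_\mathcal{L}}(\text{Pr}^L)\simeq\text{Pr}^L_{\text{Mdl}_\mathcal{L}\text{-enr}}$ requires foundations that do not yet exist in the literature (at the time of the paper). Your fallback plan of assembling the equivalence by hand from the data $\text{Map}^\text{enr}(1^{\times m},1^{\times n})$ is plausible in spirit but would face exactly the coherence difficulties you anticipate, and would in any case amount to building the missing enriched-presentable machinery in this special case. In short, your diagnosis matches the paper's: the conjecture is open because the relevant enriched $\infty$-categorical input is missing, not because of any subtlety specific to Lawvere theories.
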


\section{Additive Lawvere theories}
\noindent Suppose that $\mathcal{L}$ is semiadditive as an $\infty$-category; essentially, finite products are also finite coproducts. By Example \ref{ExSemi}, $\mathcal{L}$ is enriched in $\mathbb{E}_\infty$-spaces.

Therefore, $\text{End}(1)$ is an $\mathbb{E}_1$-semiring space, where $1$ is the distinguished object of $\mathcal{L}$, and there is a functor $$\text{End}(1):\text{SemiaddLwv}\rightarrow\mathbb{E}_1\text{Semiring}.$$

\begin{proposition}\label{PropSALwv}
This functor is an equivalence of symmetric monoidal $\infty$-categories, identifying a semiring $R$ with the Lawvere theory modeling $\text{Mod}_R$.
\end{proposition}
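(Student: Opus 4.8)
The plan is to exhibit an explicit inverse to $\text{End}(1)$ and verify the resulting equivalence by passing to models through Theorem \ref{Thm1}. For an $\mathbb{E}_1$-semiring space $R$, let $\text{Mod}_R:=\text{Mod}_R(\text{CMon}_\infty)$ denote the (connective) $R$-modules and set $\mathcal{L}_R:=(\text{Mod}_R)^\text{op}_\text{fgf}$, the opposite of its finitely generated free objects. Concretely $\mathcal{L}_R$ is the category of matrices over $R$: its objects are the $R^{\oplus n}$, finite products and coproducts agree, and semiadditivity forces $\text{Map}(R^{\oplus m},R^{\oplus n})\simeq R^{mn}$ with matrix composition. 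Thus $\mathcal{L}_R$ is a semiadditive Lawvere theory generated by $1=R$, and---fixing a left/right convention so that the two unavoidable opposites on the endomorphism semiring cancel---$\text{End}_{\mathcal{L}_R}(1)\simeq R$. Hence $R\mapsto\mathcal{L}_R$ is a section of $\text{End}(1)$, and it suffices to prove it is an equivalence. By Proposition \ref{PropYoneda} every semiadditive Lawvere theory satisfies $\mathcal{L}\simeq(\text{Mdl}_\mathcal{L})^\text{op}_\text{fgf}$, so both essential surjectivity (applied to $\mathcal{L}$) and the identification $\text{Mdl}_{\mathcal{L}_R}\simeq\text{Mod}_R$ (applied to $\mathcal{L}_R$) reduce to a single input: for any semiadditive Lawvere theory $\mathcal{L}$, the semiadditive presentable category $\text{Mdl}_\mathcal{L}$ (it is a $\text{CMon}_\infty$-module by Theorem \ref{Thm2}, since $\mathcal{L}$ is a $\text{Burn}^\text{eff}$-module) is recovered as $\text{Mod}_{\text{End}(\mathbb{I})}$ from its distinguished compact projective generator $\mathbb{I}$.

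Granting that identification, full faithfulness follows formally. Since $\text{SemiaddLwv}$ is a full subcategory of $\text{Lwv}$ and $\text{Mdl}$ is fully faithful (Theorem \ref{Thm1}),
$$\text{Map}_{\text{SemiaddLwv}}(\mathcal{L}_R,\mathcal{L}_S)\simeq\text{Map}_{\text{Pr}^L_\ast}(\text{Mod}_R,\text{Mod}_S).$$
I would then invoke the Eilenberg--Watts description of morphisms in $\text{Pr}^L$: a colimit-preserving functor $\text{Mod}_R\to\text{Mod}_S$ is tensoring with an $(S,R)$-bimodule, and the basepoint-preserving condition pins the generator to the generator, so this bimodule is $S$ with a left $R$-action---equivalently a map of $\mathbb{E}_1$-semirings $R\to S$. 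This yields $\text{Map}_{\text{Pr}^L_\ast}(\text{Mod}_R,\text{Mod}_S)\simeq\text{Map}_{\mathbb{E}_1\text{Semiring}}(R,S)$, and by construction the composite identification is $\text{End}(1)$ on mapping spaces.

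For the symmetric monoidal statement I would give $\text{SemiaddLwv}$ its structure as $\text{Burn}^\text{eff}$-modules, i.e. the relative Kronecker product $\otimes_{\text{Burn}^\text{eff}}$ with unit $\text{Burn}^\text{eff}$. Because $\text{CMon}_\infty$ is an idempotent commutative algebra in $\text{Pr}^L$, Theorem \ref{Thm2} carries this to $\otimes_{\text{CMon}_\infty}$, and for module categories one has $\text{Mod}_R\otimes_{\text{CMon}_\infty}\text{Mod}_S\simeq\text{Mod}_{R\otimes S}$, where $R\otimes S$ is the tensor of $\mathbb{E}_1$-semirings over the unit semiring $\text{End}_{\text{Burn}^\text{eff}}(1)$ (the groupoid of finite sets under $\amalg$ and $\times$). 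Thus $\text{End}(1)$ intertwines the two tensor products and sends the unit $\text{Burn}^\text{eff}$ to the unit semiring, upgrading the equivalence to a symmetric monoidal one.

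The hard part is the single input flagged above: the semiadditive Schwede--Shipley/Morita recognition $\text{Mdl}_\mathcal{L}\simeq\text{Mod}_{\text{End}(\mathbb{I})}$. Classically this is the triviality that a commutative-monoid-enriched category generated by one object is its own matrix category; in the $\infty$-setting the mapping $\mathbb{E}_\infty$-monoids carry higher coherences, so instead I would argue by Barr--Beck--Lurie monadicity for the forgetful functor $\text{ev}_1:\text{Mdl}_\mathcal{L}\to\text{Top}$ (better, its factorization through $\text{CMon}_\infty$), using that it is conservative and preserves geometric realizations by Theorem \ref{ThmGGN}, and then identifying the resulting monad with $-\otimes\text{End}(\mathbb{I})$. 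This is exactly the subtlety the introduction anticipates; everything else is formal given Theorems \ref{Thm1} and \ref{Thm2}, provided the left/right convention is maintained so the residual $(-)^\text{op}$ on endomorphism semirings is absorbed.
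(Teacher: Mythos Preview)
Your approach is correct but takes a more machinery-heavy route than the paper. The substantive difference is in how you show that every semiadditive Lawvere theory arises from a semiring. You pass to models and invoke Barr--Beck--Lurie monadicity to identify $\text{Mdl}_\mathcal{L}\simeq\text{Mod}_{\text{End}(\mathbb{I})}$, then apply $(-)^\text{op}_\text{fgf}$; you correctly flag the monad identification as the nontrivial step. The paper avoids this entirely: it builds a comparison map $\alpha:\mathcal{L}\to\text{Burn}_R$ directly (by lifting $\text{Map}_{\text{Mdl}_\mathcal{L}}(1,-)$ along the $R$-action to land in $\text{Mod}_R$, then restricting to finitely generated free objects) and checks $\alpha$ is an equivalence by the one-line semiadditivity computation $\text{Map}(m,n)\cong R^{mn}$ on both sides, the case $m=n=1$ holding by construction. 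So the very step you single out as hard simply does not appear in the paper's argument.

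For full faithfulness and the symmetric monoidal upgrade, you and the paper are closer than they look: both embed $\text{SemiaddLwv}$ into $\text{Pr}^L_\ast$ via Theorem~\ref{Thm1} and then compare with the image of $\mathbb{E}_1\text{Semiring}$ under $R\mapsto\text{Mod}_R$. You unpack the latter via Eilenberg--Watts and the formula $\text{Mod}_R\otimes_{\text{CMon}_\infty}\text{Mod}_S\simeq\text{Mod}_{R\otimes S}$; the paper just asserts that $\text{Mod}_{(-)}$ is fully faithful and symmetric monoidal into $\text{Pr}^L_\ast$ and matches essential images. Your version is more explicit about where the content lies (and about the left/right bookkeeping), while the paper's version is shorter and leans on standard Morita theory as a black box. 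Either way, the elementary mapping-space comparison for $\alpha$ is the cleaner substitute for your monadicity step.
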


\noindent The proposition should not be surprising; if $\mathcal{L}$ is semiadditive, then we know $$\text{Map}(1^{\times m},1^{\times n})\cong\text{Map}(1,1)^{\times mn},$$ which depends only on the semiring $\text{End}(1)$. Hence, we expect $\text{End}(1)$ to encode all the data of the Lawvere theory.

\begin{proof}
If $R$ is an $\mathbb{E}_1$-semiring space, write $\text{Burn}_R$ for the Lawvere theory whose models are $\text{Burn}_R$, which exists by Theorem \ref{ThmGGN}. We claim that any semiadditive Lawvere theory $\mathcal{L}$ is equivalent to $\text{Burn}_{\text{End}(1)}$.

Fix $\mathcal{L}$ and write $R=\text{End}(1)$. Since $R$ acts on $\text{Map}_{\text{Mdl}_\mathcal{L}}(1,-)$, we have $$\text{Map}(1,-):\text{Mdl}_\mathcal{L}\rightarrow\text{Mod}_R$$ which maps $1$ to $1$, preserves finite direct sums, and therefore restricts to a map of Lawvere theories $\alpha:\mathcal{L}\rightarrow\text{Burn}_R$.

Certainly $\alpha$ is essentially surjective, so we show it is full and faithful. Given objects $m=1^{\amalg m}$ and $n=1^{\amalg n}$ in $\mathcal{L}$, we wish to prove that $$\text{Map}_\mathcal{L}(m,n)\xrightarrow{\alpha_\ast}\text{Map}_{\text{Burn}_R}(m,n)$$ is an equivalence. When $m=n=1$, this is true by construction. Otherwise, since $\mathcal{L}$ and $\text{Burn}_R$ are semiadditive, we know on both sides that $\text{Map}(m,n)\cong R^{mn}$, so $\alpha_\ast$ is an equivalence.

Therefore, every semiadditive Lawvere theory is of the form $\text{Burn}_R$. By Theorem \ref{Thm1}, $\text{SemiaddLwv}$ is the symmetric monoidal full subcategory of $\text{Pr}^L_\ast$ spanned by the $\infty$-categories $\text{Mod}_R$, as $R$ ranges over $\mathbb{E}_1$-semiring spaces. On the other hand, the functor $$\mathbb{E}_1\text{Semiring}\rightarrow\text{Pr}^L_\ast$$ which sends $R$ to $\text{Mod}_R$ is also fully faithful and symmetric monoidal, which completes the proof.
\end{proof}

\noindent From the proposition, we deduce an important philosophy: Lawvere theories are more like \emph{algebraic} objects than \emph{categorical} objects. We might even regard them as generalized (non-additive) rings. 

Finally, we apply Theorem \ref{ThmGGN} to deduce:

\begin{corollary}
If $\mathcal{M}$ is presentable and semiadditive, and $\mathcal{M}\rightarrow\text{Top}$ is a right adjoint functor which is conservative and preserves geometric realizations, then $\mathcal{M}\cong\text{Mod}_R$ for some $\mathbb{E}_1$-semiring space $R$, compatibly with the forgetful functor $\text{Mod}_R\rightarrow\text{Top}$.
\end{corollary}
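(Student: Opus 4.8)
The plan is to reduce the statement to Proposition \ref{PropSALwv} by way of the Gepner--Groth--Nikolaus recognition theorem. First I would observe that a right adjoint functor $U:\mathcal{M}\rightarrow\text{Top}$ is precisely the data of a pointing of $\mathcal{M}$: as recorded before Theorem \ref{ThmGGN}, a right adjoint $\mathcal{M}\rightarrow\text{Top}$ corresponds to an object $\mathbb{I}\in\mathcal{M}$ (the free object on a point), so that $\mathcal{M}$ is canonically an object of $\text{Pr}^L_\ast$ with $U$ the underlying-space functor. Since $U$ is assumed conservative and to preserve geometric realizations, Theorem \ref{ThmGGN} produces a Lawvere theory $\mathcal{L}$ together with an equivalence $\mathcal{M}\cong\text{Mdl}_\mathcal{L}$ in $\text{Pr}^L_\ast$; in particular $U$ is identified with evaluation at $1$.

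The key remaining point is that $\mathcal{L}$ is \emph{semiadditive}, so that Proposition \ref{PropSALwv} applies. For this I would use Proposition \ref{PropYoneda} to identify $\mathcal{L}\cong\mathcal{M}_\text{fgf}^\text{op}$, the opposite of the full subcategory of $\mathcal{M}$ on the finitely generated free objects $\mathbb{I}^{\amalg n}$. Products in $\mathcal{L}$ are coproducts in $\mathcal{M}_\text{fgf}$, which are again free; and coproducts in $\mathcal{L}$ are products in $\mathcal{M}_\text{fgf}$. Because $\mathcal{M}$ is semiadditive, the canonical map $\mathbb{I}^{\amalg m}\amalg\mathbb{I}^{\amalg n}\rightarrow\mathbb{I}^{\amalg m}\times\mathbb{I}^{\amalg n}$ computed in $\mathcal{M}$ is an equivalence, and both sides are again free; hence finite products and finite coproducts agree on $\mathcal{M}_\text{fgf}$, making $\mathcal{M}_\text{fgf}$ semiadditive. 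Since semiadditivity is self-dual, $\mathcal{L}\cong\mathcal{M}_\text{fgf}^\text{op}$ is semiadditive as well.

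With $\mathcal{L}$ known to be a semiadditive Lawvere theory, Proposition \ref{PropSALwv} (or rather its proof) gives $\mathcal{L}\cong\text{Burn}_R$ for the $\mathbb{E}_1$-semiring space $R=\text{End}_\mathcal{L}(1)$, together with an equivalence $\text{Mdl}_\mathcal{L}\cong\text{Mod}_R$ under which evaluation at $1$ becomes the forgetful functor $\text{Mod}_R\rightarrow\text{Top}$. Composing with the equivalence from Theorem \ref{ThmGGN} yields $\mathcal{M}\cong\text{Mod}_R$ compatibly with the two forgetful functors, as desired. The main obstacle is the middle step: verifying that the abstract Lawvere theory extracted from $\mathcal{M}$ inherits semiadditivity. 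This is entirely a matter of checking that the biproduct structure on $\mathcal{M}$ restricts to the subcategory of free objects, and the only thing to be careful about is that finite products of free objects remain free --- which is exactly what semiadditivity of $\mathcal{M}$ guarantees.
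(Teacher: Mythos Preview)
Your proposal is correct and follows exactly the route the paper indicates: the paper simply says ``we apply Theorem \ref{ThmGGN} to deduce'' the corollary (combining it with Proposition \ref{PropSALwv}), and you have spelled out the two steps --- using Theorem \ref{ThmGGN} to realize $\mathcal{M}$ as $\text{Mdl}_\mathcal{L}$, then checking that $\mathcal{L}\cong\mathcal{M}_\text{fgf}^\text{op}$ inherits semiadditivity so that Proposition \ref{PropSALwv} applies. Your verification of the middle step is the content the paper leaves implicit.
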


\section{Applications of Lawvere theories}
\noindent We will end with two applications. The first is to the commutative algebra of semiring $\infty$-categories, and the second to equivariant homotopy theory.

\subsection{Commutative algebra of categories}
\begin{definition}
An $\infty$-category is \emph{additive} if it is semiadditive and each mapping $\mathbb{E}_\infty$-space is grouplike.
\end{definition}

\noindent That is, an additive $\infty$-category is semiadditive and enriched in $\text{Ab}_\infty\cong\text{Sp}_{\geq 0}$ (grouplike $\mathbb{E}_\infty$-spaces, or connective spectra). As promised in \cite{Berman}, we prove:

\begin{theorem}\label{ThmBurn}
The Burnside $\infty$-category Burn (which is the Lawvere theory for connective spectra) is a commutative semiring $\infty$-category, and $\text{Mod}_\text{Burn}$, $\text{AddCat}_\infty$ are equivalent full subcategories of $\text{SymMonCat}_\infty$.
\end{theorem}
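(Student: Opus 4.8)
```latex
The plan is to identify Burn as the additive analogue of $\text{Burn}^\text{eff}$ and then deduce everything from the machinery already assembled. First I would establish that Burn is a commutative semiring $\infty$-category, i.e. a commutative algebra in $\text{Lwv}^\otimes$. Burn is the Lawvere theory for connective spectra $\text{Sp}_{\geq 0}\cong\text{Ab}_\infty$; concretely it should be the group-completed Burnside category, whose mapping spaces are the group completions of those of $\text{Burn}^\text{eff}$. The additive (product) structure is cartesian monoidal, and the multiplicative structure is inherited from cartesian product of finite sets on objects, exactly as for $\text{Burn}^\text{eff}$. By Corollary \ref{Cor1}, it suffices to exhibit a closed symmetric monoidal structure on $\text{Mdl}_\text{Burn}\cong\text{Sp}_{\geq 0}$ with unit $\mathbb{I}$; this is the connective smash product (the connective truncation of the smash product of spectra), whose unit is the sphere spectrum $\mathbb{S}=\mathbb{I}$. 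Hence Burn acquires a commutative algebra structure in Lwv, which is precisely the assertion that it is a commutative semiring $\infty$-category.

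Next I would identify $\text{Mod}_\text{Burn}$ with $\text{AddCat}_\infty$, mirroring Example \ref{ExSemi}. There, $\text{Burn}^\text{eff}$-modules were identified with semiadditive $\infty$-categories via \cite{Berman} Theorem 1.2. The additive case should run in parallel: a module over Burn is a presentable $\infty$-category on which Burn acts, and since $\text{Mdl}_\text{Burn}\cong\text{Sp}_{\geq 0}$, Corollary \ref{CorEnr} shows such a module is naturally enriched in connective spectra, hence additive. The substantive point is the converse — that every additive $\infty$-category is a Burn-module. I would argue that additivity (semiadditivity together with grouplike, i.e. spectrum-valued, mapping objects) is exactly the structure needed to upgrade the canonical $\text{Burn}^\text{eff}$-module structure from Example \ref{ExSemi} along the map $\text{Burn}^\text{eff}\rightarrow\text{Burn}$; the group-completion on mapping spaces is what distinguishes a Burn-action from a mere $\text{Burn}^\text{eff}$-action.

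Finally I would place both $\text{Mod}_\text{Burn}$ and $\text{AddCat}_\infty$ inside $\text{SymMonCat}_\infty$ as full subcategories and check the equivalence is compatible with the inclusions. Using Example \ref{ExSMCat}, symmetric monoidal $\infty$-categories are models of $\text{Burn}^\text{eff}$ in $\text{Cat}_\infty$; the additive ones are cut out by the grouplike/semiadditive conditions, giving $\text{AddCat}_\infty\subseteq\text{SymMonCat}_\infty$ as a full subcategory. On the other side, $\text{Mod}_\text{Burn}$ embeds into $\text{SymMonCat}_\infty$ by remembering the symmetric monoidal structure underlying the Burn-action. The fully-faithfulness of Mdl (Theorem \ref{Thm1}) and its symmetric monoidality (Theorem \ref{Thm2}) guarantee that these two embeddings have the same essential image, so they are equivalent full subcategories.

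The hard part will be the converse direction of the module identification: pinning down rigorously that an abstract additive $\infty$-category carries a Burn-module structure, and that this assignment is inverse to taking underlying categories. The enrichment supplied by Corollary \ref{CorEnr} gives the module-to-enriched direction cleanly, but the reverse requires knowing that enrichment in $\text{Sp}_{\geq 0}$ can be promoted to a genuine presentable-module structure over Burn — essentially the content flagged as unresolved in the Conjecture following Corollary \ref{CorEnr}. I expect to sidestep the general conjecture by invoking \cite{Berman} Theorem 1.2 for the semiadditive case and reducing the additive case to it via group completion, so the real obstacle is making the group-completion comparison between $\text{Burn}^\text{eff}$- and Burn-module structures precise rather than proving any genuinely new enrichment statement.
```
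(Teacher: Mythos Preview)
Your outline has the right shape but misses the single idea that makes the paper's proof work: the \emph{idempotence} of Burn. The paper first observes (via \cite{GGN}) that the multiplication $\text{Sp}_{\geq 0}\otimes\text{Sp}_{\geq 0}\rightarrow\text{Sp}_{\geq 0}$ is an equivalence, and then uses the symmetric monoidality of $\text{Mdl}$ (Theorem \ref{Thm2}) together with its full faithfulness (Theorem \ref{Thm1}) to conclude that $\text{Burn}\otimes\text{Burn}\rightarrow\text{Burn}$ is an equivalence. It is this idempotence that forces the forgetful functor $\text{Mod}_\text{Burn}\rightarrow\text{SymMonCat}_\infty$ to be fully faithful, so that being a Burn-module is a \emph{property} rather than extra structure. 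Your appeal to Theorems \ref{Thm1} and \ref{Thm2} alone does not yield this; those theorems tell you about $\text{Lwv}\hookrightarrow\text{Pr}^L_\ast$, not about $\text{Mod}_\text{Burn}\hookrightarrow\text{SymMonCat}_\infty$. Without the idempotence you have no reason to expect $\text{Mod}_\text{Burn}$ to be a full subcategory of anything.

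For the converse direction (additive $\Rightarrow$ Burn-module), your ``upgrade along group completion'' plan is exactly the step you correctly flag as unresolved, and you do not actually carry it out. The paper sidesteps this entirely with a concrete maneuver: given an additive $\mathcal{C}$, pass to the presheaf category $\mathcal{P}(\mathcal{C})$, which is additive and presentable; invoke \cite{GGN} to get that $\mathcal{P}(\mathcal{C})$ is an $\text{Sp}_{\geq 0}$-module in $\text{Pr}^L$; push this down to $\text{SymMonCat}_\infty$ via the lax monoidal forgetful functor; restrict scalars along the bimonoidal embedding $\text{Burn}\hookrightarrow\text{Mdl}_\text{Burn}$; and finally restrict to the full subcategory $\mathcal{C}\subseteq\mathcal{P}(\mathcal{C})$, which is closed under direct sum. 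This presheaf trick is what lets you borrow the presentable-module machinery of \cite{GGN} even for a non-presentable $\mathcal{C}$, and it replaces your vague group-completion comparison with an argument that actually closes.
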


\begin{remark}
Note that Theorem \ref{ThmBurn} implies Conjecture \ref{ConjLwv} for the specific Lawvere theory Burn. Indeed, $\text{Mdl}_\text{Burn}\cong\text{Sp}_{\geq 0}$-enriched Lawvere theories are additive Lawvere theories, so the conjecture asserts that a Lawvere theory is a Burn-module if and only if it is additive.
\end{remark}

\begin{proof}
As in \cite{GGN}, the product map $\text{Sp}_{\geq 0}\otimes\text{Sp}_{\geq 0}\rightarrow\text{Sp}_{\geq 0}$ is an equivalence. Identifying $\text{Sp}_{\geq 0}$ with $\text{Mdl}_\text{Burn}$ and noting that $\text{Mdl}$ is symmetric monoidal, we find that $\text{Burn}\otimes\text{Burn}\rightarrow\text{Burn}$ is also an equivalence.

This means that the forgetful functor $\text{Mod}_\text{Burn}\rightarrow\text{SymMonCat}_\infty$ is fully faithful. We need to show that a symmetric monoidal $\infty$-category admits the structure of a Burn-module if and only if it is additive.

First, if $\mathcal{C}$ is a Burn-module, it is a $\text{Burn}^\text{eff}$-module and therefore semiadditive (Example \ref{ExSemi}), but it is also $\text{Mdl}_\text{Burn}$-enriched by Corollary \ref{CorEnr}. By definition, it is therefore additive.

Conversely, suppose $\mathcal{C}$ is additive, so that $\mathcal{P}(\mathcal{C})=\text{Fun}(\mathcal{C}^\text{op},\text{Top})$ is additive and presentable. By \cite{GGN}, $\mathcal{P}(\mathcal{C})$ is a $\text{Mdl}_\text{Burn}$-module in $\text{Pr}^L$, and therefore also in $\text{SymMonCat}_\infty$, because the functor $\text{Pr}^L\rightarrow\text{SymMonCat}_\infty$ is lax symmetric monoidal which forgets everything except the cocartesian monoidal structure.

The embedding $\text{Burn}\cong\text{Burn}^\text{op}\subseteq\text{Mdl}_\text{Burn}$ respects both symmetric monoidal structures ($\oplus$ and $\otimes$), so $\mathcal{P}(\mathcal{C})$ is a Burn-module, as a cocartesian monoidal $\infty$-category. Moreover, the full subcategory $\mathcal{C}\in\mathcal{P}(\mathcal{C})$ is closed under direct sum, so it inherits a Burn-module structure. This completes the proof.
\end{proof}

\subsection{Equivariant homotopy theory}
\noindent Throughout this section, $G$ is a finite group. We write $\text{Fin}_G$ for the category of finite $G$-sets, and $\text{Burn}_G$ for the associated $\infty$-category of virtual spans, often referred to as the \emph{Burnside $\infty$-category} without mention of the particular group. See \cite{BarMack} for more on this.

All group actions will be on the right.

There are two classical model categories of equivariant $G$-spaces: the `naive' model structure has weak equivalences those maps which are weak equivalences of the underlying space. The corresponding $\infty$-category is $\text{Fun}(BG,\text{Top})$, because equivalences in a functor $\infty$-category are likewise checked objectwise, and $BG$ has only one object (up to equivalence).

On the other hand, the `genuine' model structure has weak equivalences those maps which have inverses up to homotopy. This model category corresponds to an $\infty$-category $\text{Top}_G$ which is certainly not equivalent to $\text{Fun}(BG,\text{Top})$! For example, the map $EG\rightarrow\ast$ is an equivalence in the former but not in the latter model structure.

For spectra as well, there is a distinction between $\text{Fun}(BG,\text{Sp})$ and the $\infty$-category $\text{Sp}_G$ of genuine equivariant spectra. Consult \cite{Equivariant} for a classical survey.

We might ask how to describe $\text{Top}_G$ and $\text{Sp}_G$ in higher categorical terms. For this, we have the two theorems:
\begin{itemize}
\item (Elmendorf's Theorem: \cite{Elmendorf} Theorem 1) $\text{Top}_G\cong\text{Mdl}(\text{Fin}_G^\text{op})$;
\item (Guillou-May's Theorem: \cite{GMay2} Theorem 0.1, \cite{BarMack} Example B.6) \\$\text{Sp}_G^{\geq 0}\cong\text{Mdl}(\text{Burn}_G)$.
\end{itemize}

\noindent Recall that we have used the notation $\text{Mdl}(\mathcal{L})=\text{Fun}^\times(\mathcal{L},\text{Top})$ whenever $\mathcal{L}$ admits finite products, even if it is not a Lawvere theory. However, $\text{Fin}_G$ and $\text{Burn}_G^\text{eff}$ are not far from being Lawvere theories: although they do not have single generating objects, they are generated freely by the set of orbits $G/H$, as $H$ ranges over subgroups of $G$.

We call them \emph{colored Lawvere theories}, with set of colors $\{G/H\}$, or \emph{equivariant Lawvere theories}, because they admit essentially surjective, product-preserving maps from the groupoid of finite $G$-sets, $\text{Fin}_G^\text{iso}$.

\begin{remark}\label{RmkGen}
The word `genuine', used to describe equivariant spaces and spectra, can be misleading. Frequently, group actions on spectra arise via abstract homotopy-theoretic means, such as when the spectra themselves are algebraic in nature (as in chromatic homotopy theory). In these cases, we typically do not expect `genuine' equivariant structures.

However, when our spaces or spectra arise geometrically out of point-set constructions, group actions will be `genuine'. This is because we can pass through the model category of genuine equivariant objects, on our way to the abstract $\infty$-categories $\text{Top}_G$ and $\text{Sp}_G$.

It would almost be better to regard the `genuine' actions as `geometric', and the `naive' actions as `homotopical'.
\end{remark}

\noindent The theorems of Elmendorf and Guillou-May may be combined with Corollary \ref{CorEnr} as follows:

\begin{corollary}
Regarding $\text{Fin}_G$ and $\text{Burn}_G^\text{eff}$ as commutative semiring $\infty$-categories, any $\text{Fin}_G$-module is naturally enriched in genuine $G$-spaces, and any $\text{Burn}_G^\text{eff}$-module is naturally enriched in (connective) genuine $G$-spectra.
\end{corollary}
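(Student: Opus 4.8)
The plan is to deduce this directly from Corollary \ref{CorEnr}, once the two categories are correctly set up as commutative semiring $\infty$-categories and the enriching bases are named via the theorems of Elmendorf and Guillou--May. The only genuine content is bookkeeping about variance and about which monoidal structures play the additive and multiplicative roles.

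First I would fix the semiring structures. For the $G$-spaces case the relevant (colored) Lawvere theory is $\text{Fin}_G^\text{op}$, so that its categorical product is the disjoint union of $G$-sets --- this is exactly what Elmendorf's identification $\text{Top}_G\cong\text{Mdl}(\text{Fin}_G^\text{op})$ requires. On $\text{Fin}_G^\text{op}$ the additive structure $\oplus$ is this categorical product, hence cartesian monoidal as Corollary \ref{CorEnr} demands, while the multiplicative structure $\otimes$ is the cartesian product of $G$-sets, which distributes over disjoint union and has unit the one-point $G$-set; this exhibits $\text{Fin}_G^\text{op}$ as a commutative semiring $\infty$-category. For $\text{Burn}_G^\text{eff}$ the setup is cleaner, since the effective Burnside category is self-dual and semiadditive: products and coproducts agree (disjoint union) and supply $\oplus$, while cartesian product of $G$-sets again supplies $\otimes$.

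Next I would note that Corollary \ref{CorEnr}, although stated for single-generated Lawvere theories, applies verbatim to these colored Lawvere theories: its proof uses only that $\text{Mdl}$ is symmetric monoidal, and Theorem \ref{Thm2} provides this for all of $\text{CartMonCat}_\infty$. Thus $\text{Mdl}$ carries the commutative-algebra structures on $\text{Fin}_G^\text{op}$ and $\text{Burn}_G^\text{eff}$ to closed symmetric monoidal (Day convolution) structures on $\text{Mdl}(\text{Fin}_G^\text{op})$ and $\text{Mdl}(\text{Burn}_G^\text{eff})$, and a module over either semiring category is sent to a module in $\text{Pr}^L$ over the corresponding closed symmetric monoidal presentable category, hence is enriched in it. Feeding in Elmendorf's theorem names the base in the first case as genuine $G$-spaces $\text{Top}_G$, which settles that half.

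The delicate step is naming the enriching base in the Burnside case, and I expect this to be the main obstacle. Corollary \ref{CorEnr} literally produces enrichment in $\text{Mdl}(\text{Burn}_G^\text{eff})$, which is the equivariant analogue of $\mathbb{E}_\infty$-spaces (semi-Mackey functors valued in $\text{Top}$), exactly as in the non-equivariant Example \ref{ExSemi}; whereas the connective genuine $G$-spectra of the statement are $\text{Sp}_G^{\geq 0}\cong\text{Mdl}(\text{Burn}_G)$ by Guillou--May, i.e. the group-completed (virtual span) version. To reconcile these I would either rerun the argument with the genuine Burnside category $\text{Burn}_G$ in place of $\text{Burn}_G^\text{eff}$, so that Guillou--May applies on the nose, or restrict to modules whose induced enrichment is grouplike and invoke the equivariant recognition principle to upgrade $\mathbb{E}_\infty$-$G$-spaces to connective $G$-spectra. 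Away from this effective-versus-virtual distinction, the corollary is a formal consequence of Corollary \ref{CorEnr} together with the two cited identifications.
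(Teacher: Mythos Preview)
Your approach is exactly what the paper intends: the corollary is stated immediately after the sentence ``The theorems of Elmendorf and Guillou--May may be combined with Corollary~\ref{CorEnr} as follows,'' and no further proof is given. So the paper treats it as a direct application of Corollary~\ref{CorEnr} together with the identifications $\text{Top}_G\cong\text{Mdl}(\text{Fin}_G^\text{op})$ and $\text{Sp}_G^{\geq 0}\cong\text{Mdl}(\text{Burn}_G)$, which is precisely your plan.

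You have, however, correctly identified a genuine imprecision in the statement that the paper does not address. The corollary as written speaks of $\text{Burn}_G^\text{eff}$-modules being enriched in connective genuine $G$-spectra, but Guillou--May identifies $\text{Sp}_G^{\geq 0}$ with $\text{Mdl}(\text{Burn}_G)$, the \emph{virtual} span category, not the effective one. Applying Corollary~\ref{CorEnr} to $\text{Burn}_G^\text{eff}$ literally yields enrichment in $\text{Mdl}(\text{Burn}_G^\text{eff})$, the equivariant analogue of $\mathbb{E}_\infty$-spaces, just as in the non-equivariant Example~\ref{ExSemi}. Your proposed fix---replacing $\text{Burn}_G^\text{eff}$ by $\text{Burn}_G$ so Guillou--May applies on the nose---is the clean resolution, and is consistent with how the paper handles the non-equivariant case (Example~\ref{ExSemi} for $\text{Burn}^\text{eff}$ versus Theorem~\ref{ThmBurn} for $\text{Burn}$). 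The variance bookkeeping you flag for $\text{Fin}_G$ versus $\text{Fin}_G^\text{op}$ is likewise a point the paper leaves implicit.
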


\noindent More generally, suppose we have some algebraic structure, whose homotopical instances form an $\infty$-category $\mathcal{C}$. For example, $\mathcal{C}=\text{Sp}_{\geq 0}$ corresponds to the structure `abelian group'. We might ask: what kind of structure does a \emph{genuine} equivariant $G$-object of $\mathcal{C}$ have?

This is a question which is not entirely idle. Following Remark \ref{RmkGen}, if an object of $\mathcal{C}$ has an action of $G$ at some sufficiently concrete point-set level, we might expect \emph{additional structure} to carry over to the $\infty$-category $\mathcal{C}$, beyond a naive $G$-action.

By analogy with the theorems of Elmendorf and Guillou-May, we propose addressing this question via a 3-step procedure:
\begin{enumerate}
\item check whether $\mathcal{C}$ is of the form $\text{Mdl}_\mathcal{L}$ for some Lawvere theory $\mathcal{L}$ (possibly by means of Theorem \ref{ThmGGN});
\item check whether $\mathcal{L}$ can be described combinatorially, by applying some construction $\mathcal{M}$ to Fin (as in Principle \ref{PrCombLwv});
\item $\text{Fun}^\times(\mathcal{M}(\text{Fin}_G),\mathcal{C})$ is a candidate for genuine $G$-objects of $\mathcal{C}$.
\end{enumerate}

\begin{example}
When $\mathcal{C}=\text{Top}$, $\mathcal{L}=\text{Fin}^\text{op}$ and the combinatorial construction $\mathcal{M}$ is the opposite category construction, so that (3) is Elmendorf's Theorem.

When $\mathcal{C}=\text{Sp}_{\geq 0}$, $\mathcal{L}=\text{Burn}$ and the combinatorial construction $\mathcal{M}$ is the virtual span construction, so that (3) is Guillou-May's Theorem.
\end{example}

\noindent One goal is to use this strategy to understand equivariant $\mathbb{E}_\infty$-ring spectra via the Lawvere theory of \emph{bispans} of finite $G$-sets, by analogy with the construction of Tambara functors \cite{Tambara}.

We hope to address these problems in a sequel, in which we will discuss combinatorial constructions of Lawvere theories (as in Principle \ref{PrCombLwv}).

\end{document}